\newcommand{\vect}[1]{\boldsymbol{#1}}
\newcommand{\dif}{\, d}
\newcommand{\pd}{\partial}
\newcommand{\brac}[1]{\left( {#1} \right)}
\newcommand{\bracc}[1]{\left\{ {#1} \right\}}
\newcommand{\jump}[1]{\llbracket {#1} \rrbracket}
\newcommand{\avg}[1]{\langle {#1} \rangle}
\newcommand{\norm}[1]{\left\| {#1} \right\|}
\newcommand{\snorm}[1]{\left| {#1} \right|}
\newcommand{\unorm}[1]{%
  \left\vert\kern-0.9pt\left\vert\kern-0.9pt\left\vert #1
    \right\vert\kern-0.9pt\right\vert\kern-0.9pt\right\vert}
\title{Analysis of an interface stabilised finite element method: The
       advection-diffusion-reaction equation}
\author{Garth N. Wells\thanks{Department of Engineering,
University of Cambridge, Trumpington Street, Cambridge CB2 1PZ, United Kingdom
 ({\tt gnw20@cam.ac.uk})}.}
\date{\today}
\begin{document}
\maketitle
\begin{abstract}
Analysis of an interface stabilised finite element method for the scalar
advection-diffusion-reaction equation is presented.
The method inherits attractive properties of both continuous
and discontinuous Galerkin methods, namely the same number of global
degrees of freedom as a continuous Galerkin method on a given mesh and the
stability properties of discontinuous
Galerkin methods for advection
dominated problems. Simulations using the approach in
other works demonstrated good stability properties with minimal
numerical dissipation, and standard convergence rates for the lowest order
elements were observed. In this work,
stability of the formulation, in the form of an inf-sup
condition for the hyperbolic limit and coercivity for the elliptic case,
is proved,
as is order $k +1/2$ order convergence for the advection-dominated case
and order $k +1$ convergence for the diffusive limit in the $L^{2}$ norm.
The analysis results are supported by a number of numerical
experiments.
\end{abstract}
\begin{keywords}
Finite element methods, discontinuous Galerkin methods, advection-diffusion-reaction
\end{keywords}

\begin{AMS}
 	 	65N12, 65N30
\end{AMS}

\pagestyle{myheadings}
\thispagestyle{plain}
\markboth{GARTH N. WELLS }{AN INTERFACE STABILISED FINITE ELEMENT METHOD}
\section{Introduction}
\label{sec:introduction}
Discontinuous Galerkin methods have proven effective and popular for classes of
partial differential equations, in particular transport equations
in which advection is dominant.
The attractive stability properties of suitably constructed discontinuous
Galerkin methods and the possibility of matching non-conforming meshes are advantageous,
but do come at the cost of an increased number of global degrees of freedom
on a given mesh compared to continuous Galerkin methods. In a number of recent
works, advances have been made in reconciling the appealing features of
continuous and discontinuous Galerkin methods in one framework.
Works in this direction include those of \citet{hughes:2006},
\citet{labeur:2007} and \citet{cockburn:2009} for the advection-diffusion
equation, \citet{burman:2007} for advection-reaction equation,
and \citet{labeur:2007} and \citet{labeur:2009}
for the incompressible Navier-Stokes equations.
These methods generally strive for a reduction in the
number of global degrees of freedom relative to a conventional
discontinuous Galerkin method without sacrificing other desirable
features. In this work, stability and convergence estimates are presented
for one such method applied to the scalar
advection-diffusion-reaction equation, namely the interface stabilised method
as formulated in \citet{labeur:2007}.

The principle behind the interface stabilised method is
simple: the equation of interest is posed cell-wise subject to weakly
imposed Dirichlet boundary conditions in the spirit of discontinuous Galerkin
methods. The boundary condition which is weakly satisfied is provided
by an `interface' function that lives only on cell facets and is
single-valued on cell facets.
An equation for this additional field is furnished by
insisting upon weak continuity of the so-called `numerical flux'  across cell facets.
This weak continuity of the numerical flux is in contrast with typical
discontinuous Galerkin methods which satisfy continuity
of the numerical flux across cell facets point-wise by construction.
For particular choices in the method,
it may be possible to achieve point-wise continuity.
Upwinding of the advective flux at interfaces can be incorporated
naturally in the definition of the numerical flux,
as is typical for discontinuous Galerkin methods.
By building a degree of continuity into the interface function spaces (at
cell vertices in two dimensions and across cell edges in three dimensions), the number of global
degrees of freedom is equal to that for a continuous Galerkin method on the same mesh.
The key to this reduction in the number of global degrees of freedom
is that functions which are defined on cells are not linked directly
across cell facets, rather they communicate only
via the interface function. Therefore, functions on cells can be eliminated
locally (cell-wise) in favour of the functions that live on cell facets.
Outwardly the approach appears to have elements in common with mortar
methods, and could serve to elucidate links between mortar and
discontinuous Galerkin methods.

The motivation for analysing the interface stabilised method
comes from the observed performance of the method for the advection-diffusion
in \citet{hughes:2006} and \citet{labeur:2007} and for the
incompressible Navier-Stokes equations in \citet{labeur:2007},
and for the Navier-Stokes equations on moving domains,
as presented in \citet{labeur:2009}. The method was observed in simulations
to be robust and only  minimal numerical dissipation could be detected.
\citet{labeur:2007} also showed that the methodology can lead to a stable formulation
for Stokes equation using equal-order Lagrange basis functions for the velocity
and the pressure.
The method examined in this work is closely related to that
formulated by \citet{hughes:2006} for the advection-diffusion equation,
and analysed in \citet{buffa:2006}.
\citet{buffa:2006}
proved stability for a streamline-diffusion
stabilised variant of the method, but not for the original
formulation. For the case without the additional
streamline diffusion term,  stability was demonstrated
for some computed examples by evaluating the inf-sup condition numerically.
However, in the absence of an analytical stability estimate convergence estimates
could not be formulated.
The stability and error estimates developed here for a method without an additional
streamline diffusion term
are made possible
by: (1) the different and transparent format in which the problem is posed; and (2)
the different machinery that is brought to bear on the problem. With respect to the
last point, advantage is taken of some
developments formulated by~\citet{ern:2006}.

In the remainder of this work, the equation of interest and the
numerical method to be analysed are first formalised. This is followed by analysis
of the hyperbolic case, for which satisfaction of an inf-sup is demonstrated. The
the diffusive limit case is then considered, for which demonstration
of coercivity suffices. The results of some numerical simulations are then
presented in support of the analysis, after which conclusions are drawn.

\section{Interface stabilised method}
\label{sec:preliminaries}
%
\subsection{Model problem}
Consider a polygonal domain $\Omega \subset \mathbb{R}^{d}$,
where $1 \le d \le 3$, with boundary $\Gamma = \pd \Omega$.
The unit outward normal vector to the domain is denoted by~$\vect{n}$.
The advection-diffusion-reaction equation reads:
\begin{equation}
  \mu u + \vect{a} \cdot \nabla u - \kappa \nabla^{2} u = f  \quad {\rm in} \  \Omega,
\label{eqn:strong}
\end{equation}
where $\mu \ge 0$ and $\kappa \ge 0$ are assumed to be constant,
$\vect{a} : \Omega \rightarrow \mathbb{R}^{d}$ is a divergence-free vector
field that is Lipschitz continuous on $\Bar{\Omega}$
and satisfies $\norm{\vect{a}}_{L^{\infty}(\Omega)} \le 1$,
and $f : \Omega \rightarrow \mathbb{R}$
is a suitably regular source term.
The divergence-free condition on $\vect{a}$ can easily be relaxed
to $\mu - (1/2) \nabla \cdot \vect{a} > 0$.
Portions of the boundary on which $\vect{a} \cdot \vect{n} \ge 0$ are denoted by
$\Gamma_{+}$, and
portions on which $\vect{a} \cdot \vect{n} < 0$ are denoted by~$\Gamma_{-}$.
A function $\zeta$ is defined on boundaries such that
$\zeta = 0$ on outflow portions of the boundary ($\Gamma_{+}$) and
$\zeta =1$ on inflow portions of the boundary~($\Gamma_{-}$).

For the case $\kappa > 0$, the boundary is partitioned
into $\Gamma_{N}$ and $\Gamma_{D}$
such that $\overline{\Gamma_{N} \cup \Gamma_{D}} = \Gamma$ and
$\Gamma_{N} \cap \Gamma_{D} = \emptyset$, and
the boundary conditions
\begin{equation}
\begin{split}
  \brac{- \zeta u \vect{a} + \kappa \nabla u} \cdot \vect{n}
        &= g \quad {\rm on} \ \Gamma_{N},
\\
  u &= 0 \quad {\rm on} \ \Gamma_{D},
\end{split}
\label{eqn:strong_bcs}
\end{equation}
are considered, where $g : \Gamma_{N} \rightarrow \mathbb{R}$ is a
suitably smooth prescribed function.
For the case $\kappa = 0$, then $\Gamma_{D} = \emptyset$,
$\Gamma_{N} = \Gamma_{-}$ and
the considered boundary condition reads:
\begin{equation}
  -u \vect{a} \cdot \vect{n} = g \quad {\rm on} \ \Gamma_{-}.
\label{eqn:strong_bcs_hyperbolic}
\end{equation}
%
\subsection{The method}
Let $\mathcal{T}$ be a triangulation of $\Omega$
into non-overlapping simplices such that $\mathcal{T} = \bracc{K}$.
A simplex $K \in \mathcal{T}$ will be referred to as a cell and
a measure of the size of a cell $K$ will be denoted by~$h_{K}$,
with the usual assumption that $h_{K} \le 1$,
and $h = \max_{K\in \mathcal{T}} h_{K}$.
The boundary of a cell $K$ is denoted by $\pd K$ and
the outward unit normal to a cell is denoted by~$\vect{n}$.
The outflow portion of a cell boundary is the portion on
which $\vect{a} \cdot \vect{n} \ge 0$, and is denoted by~$\pd K_{+}$.
The inflow portion of a cell boundary is the portion on
which $\vect{a} \cdot \vect{n} < 0$, and is denoted by~$\pd K_{-}$.
As for the exterior boundary, the function $\zeta$
is defined such that $\zeta = 0$ on $\pd K_{+}$
and $\zeta =1$ on~$\pd K_{-}$.
The set of all facets $\mathcal{F} = \bracc{F}$ contained
in the mesh will be used, as will the union of all facets,
which is denoted by~$\Gamma^{0}$.
Adjacent cells are considered to share a common facet~$F$.

The bilinear and linear forms for the
advection-diffusion-reaction equation are now introduced.
Using the notation
$\vect{w} = \brac{w, \Bar{w}}$ and $\vect{v} = \brac{v, \Bar{v}}$,
consider the bilinear form:
\begin{multline}
  B\brac{\vect{w}, \vect{v}}
    =
     \int_{\Omega} \mu w v \dif x
    + \int_{\Omega} \brac{-\vect{a} w + \kappa \nabla w} \cdot \nabla v \dif x
\\
+ \sum_{K} \int_{\pd K} \brac{-\vect{a} w + \kappa \nabla w -\brac{\zeta \vect{a}
           - \frac{\alpha\kappa}{h_{K}}\vect{n}} \brac{\Bar{w} - w }}
          \cdot \vect{n} \brac{\Bar{v} - v} \dif s
\\
+ \sum_{K} \int_{\pd K} \kappa \brac{\Bar{w} - w} \nabla v \cdot \vect{n} \dif s
+ \int_{\Gamma_{+}} \vect{a} \cdot \vect{n} \Bar{v} \Bar{w} \dif s
\end{multline}
and the linear form
\begin{equation}
  L\brac{\vect{v}}
    = \int_{\Omega} f v \dif x + \int_{\Gamma_{N}} g \Bar{v} \dif s,
\end{equation}
where $\alpha \ge 0$. The relevant finite element function spaces for the problem
which will be considered read
\begin{align}
  W_{h}       &= \bracc{w_{h} \in L^{2}\brac{\Omega},
                        w_{h}|_{K} \in P_{k}\brac{K}
                        \forall K \in \mathcal{T}},
\\
  \Bar{W}_{h} &= \bracc{\Bar{w}_{h} \in H^{l}\brac{\Gamma^{0}},
                        \bar{w}_{h}|_{F} \in P_{k}\brac{F} \forall F \in \mathcal{F},
                        \bar{w}_{h} = 0 \ {\rm on} \ \Gamma_{D}},
\label{eqn:interface_space}
\end{align}
where $0 \le l \le 1$ and $P_{k}(K)$ denotes the space of standard Lagrange
polynomial functions of order $k$ on cell~$K$.
The space $W_{h}$ is the usual
space commonly associated with discontinuous Galerkin methods,
and the space $\Bar{W}_{h}$ contains Lagrange polynomial shape functions
that `live' only on cell facets and are single-valued on facets.
The choice of $l$, which determines the regularity of the facet
functions at cell vertices in two dimensions and across cell edges
in three dimensions, will have a significant impact
on the structure of the resulting matrix problem.
Using the notation $W^{\star}_{h} = W_{h} \times \Bar{W}_{h}$
and $\vect{v}_{h} = \brac{v_{h}, \Bar{v}_{h}}$,
the finite element problem of interest reads:
find $\vect{u}_{h} \in W_{h}^{\star}$ such that
\begin{equation}
  B\brac{\vect{u}_{h}, \vect{v}_{h}} = L\brac{\vect{v}_{h}}
      \quad \forall \vect{v}_{h} \in W^{\star}_{h}.
\label{eqn:fe_problem}
\end{equation}

To motivate the terms appearing in the bilinear form,
it is useful to consider the case in which
$v = 0$ and the case in which
$\Bar{v} = 0$ separately. Considering first $\vect{v}_{h} = \brac{v_{h}, 0}$,
the variational problem corresponding to
equation~\eqref{eqn:fe_problem} for a single cell reads:
given $\Bar{u}_{h} \in \Bar{W}_{h}$, find
$u_{h} \in W_{h}$ such that for all $v_{h} \in W_{h}$
\begin{multline}
    \int_{K} \mu u_{h} v_{h} \dif x
    + \int_{K}  \vect{\sigma}\brac{u_{h}} \cdot \nabla v_{h} \dif x
- \int_{\pd K} \Bar{\vect{\sigma}}\brac{\vect{u}_{h}} \cdot \vect{n}  v_{h} \dif s
\\
+ \int_{\pd K} \kappa \brac{\Bar{u}_{h} - u_{h}} \nabla v_{h} \cdot \vect{n} \dif s
    = \int_{K} f v_{h} \dif x,
\label{eqn:local_eqn}
\end{multline}
where $\vect{\sigma}\brac{w} = - \vect{a} \nabla w + \kappa \nabla w$
is the usual flux vector and $\Bar{\vect{\sigma}}\brac{\vect{w}}$ is
a `numerical flux',
\begin{equation}
  \Bar{\vect{\sigma}}\brac{\vect{w}} = -\vect{a} w + \kappa \nabla w -\brac{\zeta \vect{a}
           - \frac{\alpha\kappa}{h_{K}}\vect{n}} \brac{\Bar{w} - w }.
\label{eqn:numerical_flux}
\end{equation}
The problem in equation~\eqref{eqn:local_eqn} is essentially
a cell-wise postulation of a Galerkin problem for equation~\eqref{eqn:strong}
subject to the weak satisfaction of the boundary condition $u_{h} = \Bar{u}_{h}$.
In the numerical flux, the presence of the term $\zeta$ provides for
upwinding of the advective part of the flux, and the term
$\brac{\alpha\kappa/{h_{K}}}\vect{n} \brac{\Bar{w} - w}$ is
an interior penalty-type contribution to the numerical
flux~\citep{arnold:2002}.
The term $\int_{\pd K} \kappa \brac{\Bar{u}_{h} - u_{h}} \nabla v_{h} \cdot \vect{n} \dif s$
is typical of discontinuous Galerkin methods for elliptic problems,
and resembles that in~\citet{arnold:2002} for the Poisson equation.
The numerical flux can be evaluated on both sides of a facet.
On the outflow (upwind) portion of a cell boundary, the advective part
of the numerical flux is equal to the regular advective flux.
On the inflow (downwind) portion of a cell boundary, the
advective part of the numerical flux depends on the
interface function, taking on~$-\vect{a} \Bar{u}$.
The diffusive numerical flux on a cell boundary
has contributions from the
regular flux and a penalty-like contribution which depends on the difference
between $w_{h}$ and the interface function~$\Bar{w}_{h}$.
Setting  $v_{h} = 1$ in
equation~\eqref{eqn:local_eqn},
\begin{equation}
    \int_{K} \mu u_{h} \dif x
- \int_{\pd K} \Bar{\vect{\sigma}}\brac{\vect{u}_{h}} \cdot \vect{n} \dif s
    = \int_{K} f \dif x,
\end{equation}
which demonstrates local conservation in terms of the numerical flux.
Note that the numerical flux defined in equation~\eqref{eqn:numerical_flux} is not
single-valued on cell facets.
Setting $\vect{v}_{h} = \brac{0, \Bar{v}_{h}}$ furnishes the
problem: given $u_{h} \in W_{h}$, find $\Bar{u}_{h}\in \Bar{W}_{h}$ such that
for all $\Bar{v}_{h} \in \Bar{W}_{h}$
\begin{equation}
 \sum_{K} \int_{\pd K} \Bar{\vect{\sigma}}\brac{\vect{u}_{h}} \cdot \vect{n} \Bar{v}_{h} \dif s
+ \int_{\Gamma_{+}} \vect{a} \cdot \vect{n}  \Bar{u}_{h} \Bar{v}_{h} \dif  s
    =  \int_{\Gamma_{N}} g \Bar{v}_{h} \dif s,
\label{eqn:global_eqn}
\end{equation}
which is a statement of weak continuity of the numerical flux across cell facets.

Noteworthy in the bilinear form is that the functions $w_{h}$,
which are discontinuous across cell facets,
are not linked directly across facets. They are only linked
implicitly through their interaction with~$\Bar{w}_{h}$.
Setting $\vect{v}_{h} = \brac{v_{h}, 0}$ leads to a local (cell-wise) problem,
which, given $\Bar{u}_{h}$ and $f$ can be solved locally
to eliminate~$u_{h}$ in favour of~$\Bar{u}_{h}$.
This process is commonly referred to as static condensation.
Then, setting $\vect{v}_{h} = \brac{0, \Bar{v}_{h}}$, one
can solve a global problem to yield the interface solution $\Bar{u}_{h}$.
The field $u_{h}$ can then be recovered trivially element-wise.
To formulate a global problem with the same number of degrees as a continuous
finite element method, $l$ in equation~\eqref{eqn:interface_space} must
be chosen such that there is only one degree of freedom at a given point;
the interface functions are continuous at cell vertices
in two dimensions and along cell edges in three dimensions.
Further details on the formulation of the interface stabilised
method and various algorithmic details
can be found in \citet{labeur:2007}.

The formulation of \citet{hughes:2006} can be manipulated into framework presented in
this section, and in the hyperbolic limit coincides with the formulation presented
here. In the case of diffusion, \citet{hughes:2006} adopted an upwinded diffusive
flux whereas the diffusive flux is centred in the present method.
The formulation presented in
\citet{cockburn:2009} follows the same framework as \citet{labeur:2007}, although
the use of functions lying in $L^{2}\brac{\Gamma^{0}}$ on facets is
advocated.

The method is now shown to be consistent with equation~\eqref{eqn:strong}.
If $u$ solves equation~\eqref{eqn:strong}, it is chosen to define
$\vect{u} = \brac{u, u}$. The action of the trace operator
in the second slot is implicit in this
definition (this will be expanded upon in Section~\ref{sec:notation}).
With this definition of $\vect{u}$ consistency can be addressed.
\begin{lemma}[consistency]\label{lemma:consistency}
If $\vect{u} = \brac{u, u}$, where $u \in H^{m}\brac{\Omega}$
is a solution to~\eqref{eqn:strong} with
$m=2$ if $\kappa > 0$ and $m=1$ otherwise,
and if $\vect{u}_{h}$ solves~\eqref{eqn:fe_problem},
then for all $\vect{v}_{h} \in W_{h}^{\star}$
\begin{equation}
  B\brac{\vect{u} - \vect{u}_{h}, \vect{v}_{h}} = 0.
\end{equation}
\end{lemma}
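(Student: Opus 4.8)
The plan is to establish Galerkin orthogonality in the standard way: since $\vect{u}_{h}$ solves~\eqref{eqn:fe_problem}, bilinearity of $B$ gives $B\brac{\vect{u} - \vect{u}_{h}, \vect{v}_{h}} = B\brac{\vect{u}, \vect{v}_{h}} - L\brac{\vect{v}_{h}}$ for every $\vect{v}_{h} \in W_{h}^{\star}$, so it suffices to prove the \emph{consistency identity} $B\brac{\vect{u}, \vect{v}_{h}} = L\brac{\vect{v}_{h}}$ with $\vect{u} = \brac{u, u}$. First I would exploit the regularity hypothesis: because $u \in H^{m}\brac{\Omega}$ with $m \ge 1$, its trace is single-valued across facets, so the interface slot equals the trace of the cell slot and $\Bar{w} - w = 0$ on every $\pd K$. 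This immediately annihilates the upwinding and interior-penalty contributions $\brac{\zeta \vect{a} - \brac{\alpha\kappa/h_{K}}\vect{n}}\brac{\Bar{w} - w}$ as well as the term $\sum_{K} \int_{\pd K} \kappa \brac{\Bar{w} - w} \nabla v \cdot \vect{n} \dif s$, leaving only the volume terms, the facet term carrying $\vect{\sigma}\brac{u}\cdot\vect{n}\brac{\Bar{v} - v}$, and the boundary term on $\Gamma_{+}$.

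The second step is to integrate the volume flux term by parts cell-by-cell. On each $K$ I would write $\int_{K} \vect{\sigma}\brac{u}\cdot\nabla v \dif x = -\int_{K} \brac{\nabla\cdot\vect{\sigma}\brac{u}} v \dif x + \int_{\pd K} \vect{\sigma}\brac{u}\cdot\vect{n}\, v \dif s$, and then use that $\vect{a}$ is divergence-free to obtain $\nabla\cdot\vect{\sigma}\brac{u} = -\vect{a}\cdot\nabla u + \kappa \nabla^{2} u = \mu u - f$, the last equality being exactly the strong equation~\eqref{eqn:strong}. Summing over cells, the resulting $\int_{\Omega} \mu u v \dif x$ cancels the reaction term already present in $B$, a term $\int_{\Omega} f v \dif x$ is produced, and the boundary pieces $\int_{\pd K}\vect{\sigma}\brac{u}\cdot\vect{n}\, v \dif s$ combine with the existing $\int_{\pd K}\vect{\sigma}\brac{u}\cdot\vect{n}\brac{\Bar{v} - v}\dif s$ to leave precisely $\sum_{K} \int_{\pd K} \vect{\sigma}\brac{u}\cdot\vect{n}\,\Bar{v} \dif s$.

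The third step is the facet reckoning. Because $\Bar{v}$ is single-valued on facets and the regularity $m$ (with $m = 2$ when $\kappa > 0$, so that $\kappa\nabla u\cdot\vect{n}$ has a single well-defined trace, and $m = 1$ when $\kappa = 0$) makes the normal flux $\vect{\sigma}\brac{u}\cdot\vect{n}$ continuous across interior facets, the two contributions from a shared facet carry opposite normals and cancel, so only the exterior boundary survives, yielding $\int_{\Gamma} \vect{\sigma}\brac{u}\cdot\vect{n}\,\Bar{v}\dif s$. Adding the $\Gamma_{+}$ term and splitting $\Gamma$ into inflow and outflow parts, I expect the combination to collapse onto $\Gamma_{N}$: on $\Gamma_{+}$ (where $\zeta = 0$) the advective pieces cancel and $\kappa\nabla u\cdot\vect{n}$ remains, while on $\Gamma_{-}$ (where $\zeta = 1$) the full flux $\brac{-\vect{a}u + \kappa\nabla u}\cdot\vect{n}$ remains; in both cases this equals $g$ by the Neumann condition~\eqref{eqn:strong_bcs} (respectively~\eqref{eqn:strong_bcs_hyperbolic} when $\kappa = 0$), and the contribution over $\Gamma_{D}$ drops because $\Bar{v} = 0$ there. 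This produces exactly $\int_{\Gamma_{N}} g \Bar{v}\dif s$, whence $B\brac{\vect{u}, \vect{v}_{h}} = \int_{\Omega} f v \dif x + \int_{\Gamma_{N}} g \Bar{v}\dif s = L\brac{\vect{v}_{h}}$.

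I expect the main obstacle to be the boundary bookkeeping in the third step: one must reconcile the two independent partitions of $\Gamma$ (inflow/outflow $\Gamma_{\pm}$ versus Neumann/Dirichlet $\Gamma_{N}/\Gamma_{D}$) and verify that the $\Gamma_{+}$ term in $B$ is precisely what is needed to reconstruct $g$ from the normal flux in each region, for both the diffusive and the hyperbolic boundary conditions. This is exactly where the choice of $m$ and the definition of $\zeta$ enter, and it is where sign errors are easiest to make; the interior cancellation and the integration by parts, by contrast, are routine.
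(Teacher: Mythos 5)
Your proposal is correct and follows essentially the same route as the paper: reduce to $B\brac{\vect{u},\vect{v}_h}=L\brac{\vect{v}_h}$ by bilinearity, use that $\Bar{u}-u=0$ on $\pd K$ to kill the upwind and penalty terms, integrate by parts cell-wise against the strong equation, and let flux continuity plus the boundary conditions dispose of the facet and boundary terms. The only (cosmetic) difference is that the paper organises the computation by testing against $\brac{v_h,0}$ and $\brac{0,\Bar{v}_h}$ separately, whereas you carry both slots through at once; your write-up is in fact more explicit about the interior-facet cancellation and the $\Gamma_{\pm}$ versus $\Gamma_{N}/\Gamma_{D}$ bookkeeping than the paper's terse version.
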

\begin{proof}
Since $\vect{u}_{h}$ is a solution to~\eqref{eqn:fe_problem} and due
to the bilinear nature of $B$, it suffices to demonstrate
that $B\brac{\vect{u}, \vect{v}_{h}} - L\brac{\vect{v}_{h}} = 0$.
Considering first $B\brac{\vect{u}, \brac{v_{h}, 0}} - L\brac{\brac{v_{h}, 0}}$,
which is presented in
equation~\eqref{eqn:local_eqn}, after applying integration by parts
\begin{equation}
B\brac{\vect{u}, \brac{v_{h}, 0}} - L\brac{\brac{v_{h}, 0}}
   = \int_{K} \brac{\mu u + \vect{a} \nabla u - \kappa \nabla^{2} u - f}v_{h} \dif x = 0,
\label{eqn:local_consistency}
\end{equation}
since $u$ satisfies~\eqref{eqn:strong} for $\kappa = 0$.
Considering now
$B\brac{\vect{u}, \brac{0, \Bar{v}_{h}}} - L\brac{\brac{0, \Bar{v}_{h}}}$, which is presented in
equation~\eqref{eqn:global_eqn},
\begin{equation}
B\brac{\vect{u}, \brac{0, \Bar{v}_{h}}} - L\brac{\brac{0, \Bar{v}_{h}}}
   = \int_{\Gamma_{N}} \brac{\brac{-\zeta u \vect{a} + \kappa \nabla u} \cdot \vect{n}  - g} \Bar{v}_{h} \dif  s = 0,
\label{eqn:global_consistency}
\end{equation}
since $u$ satisfies the boundary condition in~\eqref{eqn:strong_bcs_hyperbolic}.
Summing equations~\eqref{eqn:local_consistency}
and~\eqref{eqn:local_consistency}
and subtracting $B\brac{\vect{u}_{h}, \vect{v}_{h}} - L\brac{\vect{v}_{h}}=0$
concludes the proof.
\end{proof}
\subsection{Limit cases}
The method will be analysed for the hyperbolic ($\kappa = 0$)
and elliptic ($\vect{a} = \vect{0}, \mu =0$) limit cases.
The bilinear form is therefore decomposed into advective and diffusive parts,
\begin{equation}
  B\brac{\vect{w}, \vect{v}}
      = B_{A}\brac{\vect{w}, \vect{v}} + B_{D}\brac{\vect{w}, \vect{v}},
\end{equation}
where
\begin{multline}
  B_{A}\brac{\vect{w}, \vect{v}}
    =
    \int_{\Omega} \mu w v \dif x
    - \sum_{K} \int_{K} \vect{a} w \cdot  \nabla v \dif x
 - \sum_{K} \int_{\pd K_{+}} \vect{a}\cdot \vect{n} w   \brac{\Bar{v} - v} \dif s
\\
 - \sum_{K} \int_{\pd K_{-}} \vect{a}\cdot \vect{n} \Bar{w} \brac{\Bar{v} - v} \dif s
+ \int_{\Gamma_{+}} \vect{a} \cdot \vect{n} \Bar{w} \Bar{v} \dif s
\label{eqn:B_hyperbolic}
\end{multline}
and
\begin{multline}
  B_{D}\brac{\vect{w}, \vect{v}}
    = \sum_{K} \int_{K} \kappa \nabla w \cdot \nabla v \dif x
\\
   + \sum_{K} \int_{\pd K} \brac{\kappa \nabla w
        + \frac{\alpha\kappa}{h_{K}}\vect{n} \brac{\Bar{w} - w }}
          \cdot \vect{n} \brac{\Bar{v} - v} \dif s
\\
+ \sum_{K} \int_{\pd K} \kappa \brac{\Bar{w} - w} \nabla v \cdot \vect{n} \dif s.
\label{eqn:B_diffusive}
\end{multline}
Stability and error estimates will be proved by analysing
$B_{A}\brac{\vect{w}, \vect{v}}$ and $B_{D}\brac{\vect{w}, \vect{v}}$
independently.
\subsection{Conventional discontinuous Galerkin methods as a special case}
If the functions defined on facets are defined to be in
$L^{2}\brac{\Gamma^{0}}$ ($l=0$ in equation~\eqref{eqn:interface_space}),
then for the hyperbolic case the formulation reduces to the conventional
discontinuous Galerkin formulation
with full upwinding of the advective flux~\citep{reed:1973,lesaint:1974,johnson:1986}.
In the diffusive limit,
it reduces to a method which closely resembles the symmetric interior
penalty method \citep{wheeler:1978,arnold:1982}.
Of prime practical interest is the case where the
interface functions are continuous as this
leads to the fewest number of global degrees of freedom, but the special case
of $l=0$
is considered briefly in this section to illustrate a link with conventional
discontinuous Galerkin methods.

For the case $\mu = \kappa = 0$, setting $v_{h} = 0$ everywhere and
$\Bar{v}_{h} = 0$ everywhere with the exception of one interior facet
$F$, the method implies that at the facet $F$
\begin{equation}
   \int_{F} \vect{a} \Bar{w}_{h} \cdot \vect{n}_{+} \Bar{v}_{h} \dif s
        = \int_{F_{-}} \vect{a} w_{h+} \cdot \vect{n}_{+} \Bar{v}_{h} \dif s,
\end{equation}
where the subscript `$+$' indicates functions evaluated on the
boundary of the upwind cell.
This implies that for a given $w_{h}$, the facet function
$\Bar{w}_{h}$ simply takes on the upwind value on each facet.
Inserting this into equation~\eqref{eqn:B_hyperbolic} and
setting~$\Bar{v}_{h} = 0$,
\begin{multline}
  B_{A}\brac{w_{h}, v_{h}}
    =
     \int_{\Omega} \mu w_{h} v_{h} \dif x
    - \sum_{K} \int_{K} \vect{a} w_{h} \cdot  \nabla v_{h} \dif x
\\
 + \sum_{K} \int_{\pd K_{+}} \vect{a}\cdot \vect{n} w_{h} v_{h} \dif s
 + \sum_{K} \int_{\pd K_{-}} \vect{a} \cdot \vect{n} w_{h+} v_{h} \dif s,
\end{multline}
which is the bilinear form associated with the
classical discontinuous Galerkin formulation
for hyperbolic problems with full upwinding.

The diffusive case ($\kappa =1$, $\mu = 0$, $\vect{a} = 0$,
$\alpha > 0$) is now considered, in which case the subscripts '$+$'
and '$-$' indicate functions evaluated on opposite sides of a
facet.
Following the same process as for the hyperbolic case leads to
\begin{multline}
  \int_{F} \frac{\alpha}{h_{K}} \Bar{w}_{h} \Bar{v}_{h} \dif s
 =
\frac{1}{2} \int_{F_{-}} \brac{-\nabla w_{h-}\cdot \vect{n}_{-}
     + \frac{\alpha}{h_{K}} w_{h-}} \Bar{v}_{h} \dif s
\\
+
\frac{1}{2} \int_{F_{+}} \brac{- \nabla w_{h+}\cdot \vect{n}_{+}
     + \frac{\alpha}{h_{K}} w_{h+}} \Bar{v}_{h} \dif s
\end{multline}
on facets. Assuming for simplicity that $h_{K}$ is constant,
inserting the expression for $\Bar{w}_{h}$ into \eqref{eqn:B_diffusive} and
after some tedious manipulations, the bilinear forms reduces to:
\begin{multline}
  B_{D}\brac{w_{h}, v_{h}}
    = \sum_{K} \int_{K} \nabla w_{h} \cdot \nabla v_{h} \dif x
- \int_{\Gamma^{0}} \avg{\nabla w_{h}} \cdot \jump{v_{h}} \dif s
\\
- \int_{\Gamma^{0}} \jump{w_{h}}\cdot \avg{\nabla v_{h}} \dif s
+ \frac{\alpha}{2h_{K}} \int_{\Gamma^{0}} \jump{w_{h}}\cdot \jump{v_{h}} \dif s
- \frac{h_{K}}{2\alpha} \int_{\Gamma^{0}} \jump{\nabla w_{h}}\jump{\nabla v_{h}} \dif s,
\end{multline}
where $\avg{a} = 1/2\brac{a_{+} + a_{-}}$
and $\jump{a} = \brac{a_{+}\vect{n}_{+} + a_{-}\vect{n}_{-}}$
are the usual average and jump definitions, respectively.
This bilinear form resembles closely that of the
conventional symmetric interior  penalty method,
with the exception of the term which penalises jumps in the gradient of the
solution.
\section{Notation and useful inequalities}
\label{sec:notation}
The standard norm on the Sobolev space $H^{s}(K)$ will be denoted by
$\norm{\cdot}_{s, K}$ and the $H^{s}(K)$ semi-norm  will be denoted by
$\snorm{\cdot}_{s, K}$.
Constants $c$ which are independent of $h_{K}$ will be used extensively in
the presentation.
The values of constants without subscripts may change at each appearance,
and the value of any constant with a numeral subscript remains fixed.
When $c$ appears with a parameter subscript, this indicates a dependence on
a model parameter. For example, $c_{\mu}$ indicates a dependence on~$\mu$.

Use will be made of various estimates for functions
on finite element cells for the case
$h_{K} \le 1$. In particular, use will be made of the
trace inequalities~\citep{arnold:1982,ern:book}
\begin{align}
  \norm{v}^{2}_{0,\pd K}
    &\le c\brac{h_{K}^{-1} \norm{v}^{2}_{0,K}
       + h_{K} \snorm{v}^{2}_{1, K}} \quad \forall v \in H^{1}(K),
\label{eqn:dg_trace_orig}
\\
  \norm{\nabla v \cdot \vect{n}}^{2}_{0,\pd K} &
      \le c\brac{h_{K}^{-1} \snorm{v}_{1,K}^{2}
       + h_{K} \snorm{v}_{2, K}^{2}} \quad \forall v \in H^{2}(K).
\label{eqn:dg_trace_normal}
\end{align}
On polynomial finite element spaces, the inverse
estimate~\citep{brenner:book,ern:book}
\begin{equation}
  \snorm{v_{h}}_{1,K} \le c h_{K}^{-1} \norm{v_{h}}_{0,K}
    \quad \forall v_{h} \in P_{k}(K)
\label{eqn:inverse_gradient}
\end{equation}
will be used extensively.
Combining equations~\eqref{eqn:dg_trace_orig}
and~\eqref{eqn:inverse_gradient} leads to
\begin{equation}
  \norm{v_{h}}_{0,\pd K} \le c h_{K}^{-\frac{1}{2}} \norm{v_{h}}_{0,K}  \quad \forall v_{h} \in P_{k}(K).
\label{eqn:dg_trace}
\end{equation}

Frequently, functions defined on $\Omega$ or on a finite element
cell $K$ will be restricted to an interior or exterior boundary.
For finite element functions defined on a cell, owing to the continuity
of the functions on a cell the trace is well-defined point-wise on the
cell boundary.
When considering functions
in $H^{s}\brac{\Omega}$ restricted to $\Gamma^{0}$, the action of  a
trace operator  $\gamma: H^{s}\brac{\Omega} \rightarrow H^{s-1/2}\brac{\Gamma^{0}}$
should be taken as implied in the presentation.
\section{Analysis for the hyperbolic limit}
The interface stabilised method is first analysed for the hyperbolic limit case
which corresponds to the bilinear form
in equation~\eqref{eqn:B_hyperbolic}. For this case the spaces
\begin{align}
  W(h)      &= W_{h} + H^{1}\brac{\Omega}, \\
 \Bar{W}(h) &= \bar{W}_{h} + H^{1/2}\brac{\Gamma^{0}},
\end{align}
will be used in the analysis, as will the notation
$W^{\star}(h) = W(h) \times \Bar{W}(h)$. The space
$\Bar{W}(h)$ has been defined such that it contains the
trace of all functions in~$H^{1}\brac{\Omega}$ on~$\Gamma^{0}$.
This will prove important in developing error estimates.

Introducing the notation $a_{n} = |\vect{a} \cdot \vect{n}|$,
two norms are defined on $W^{\star}(h)$.
The first is what will be referred to as the `stability' norm,
\begin{equation}
 \unorm{\vect{v}}_{A}^{2}
     = \mu \norm{v}^{2}_{0,\Omega}
           + \sum_{K} h_{K} \norm{\vect{a} \cdot \nabla v}^{2}_{0,K}
           + \sum_{K} \norm{a_{n}^{\frac{1}{2}} \brac{\Bar{v} - v} }^{2}_{0,\pd K}
           + \norm{a_{n}^{\frac{1}{2}} \Bar{v} }^{2}_{0, \Gamma}.
\label{eqn:stability_norm}
\end{equation}
The second norm, which will be referred to as the `continuity' norm,
reads
\begin{equation}
 \unorm{\vect{v}}^{2}_{A^{\prime}}
     = \unorm{\vect{v}}_{A}^{2}
           + \sum_{K} h_{K}^{-1} \norm{v}^{2}_{0, K}
           + \sum_{K} \norm{a_{n}^{\frac{1}{2}} \Bar{v}}^{2}_{0, \pd K_{-}}
           + \sum_{K} \norm{a_{n}^{\frac{1}{2}} v}^{2}_{0, \pd K_{+}}.
\label{eqn:error_norm}
\end{equation}
Control of $\vect{v}_{h} \in W^{\star}_{h}$ in terms of the $\unorm{\cdot}_{A}$ norm
also implies control of $h_{K}\norm{a_{n}^{\frac{1}{2}} \Bar{v}_{h}}_{0, \pd K}^{2}$
due to the following proposition.
\begin{proposition}
There exists a constant $c > 0$ such that for all $K \in \mathcal{T}$ and
for all $\vect{v}_{h} \in W^{\star}_{h}$
\begin{equation}
  h_{K} \norm{\Bar{v}_{h}}^{2}_{0, \pd K}
  \le  c \brac{\norm{\Bar{v}_{h} - v_{h}}^{2}_{0, \pd K} + \norm{v_{h}}^{2}_{0, K} }.
\end{equation}
\end{proposition}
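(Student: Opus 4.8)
The plan is to exploit the triangle inequality to replace the interface function $\Bar{v}_{h}$ on $\pd K$ by the jump $\Bar{v}_{h} - v_{h}$ together with the cell trace of $v_{h}$, and then to trade the boundary norm of $v_{h}$ for its interior norm by means of the discrete trace inequality~\eqref{eqn:dg_trace}. Concretely, I would first write $\Bar{v}_{h} = \brac{\Bar{v}_{h} - v_{h}} + v_{h}$ on $\pd K$ and apply the triangle inequality followed by the elementary bound $\brac{a+b}^{2} \le 2a^{2} + 2b^{2}$, giving
\begin{equation*}
\norm{\Bar{v}_{h}}^{2}_{0,\pd K} \le 2 \norm{\Bar{v}_{h} - v_{h}}^{2}_{0,\pd K} + 2 \norm{v_{h}}^{2}_{0,\pd K}.
\end{equation*}

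The second step is to control the remaining boundary term. Since $v_{h}|_{K} \in P_{k}(K)$, the discrete trace inequality~\eqref{eqn:dg_trace} applies and yields $\norm{v_{h}}^{2}_{0,\pd K} \le c h_{K}^{-1} \norm{v_{h}}^{2}_{0,K}$. Substituting this and multiplying through by $h_{K}$ produces
\begin{equation*}
h_{K} \norm{\Bar{v}_{h}}^{2}_{0,\pd K} \le 2 h_{K} \norm{\Bar{v}_{h} - v_{h}}^{2}_{0,\pd K} + 2 c \norm{v_{h}}^{2}_{0,K}.
\end{equation*}
Here the factor $h_{K}$ has played its intended role: the $h_{K}^{-1}$ supplied by the trace inequality cancels it in the second term, converting a boundary norm into a clean volume norm, while in the first term the surviving $h_{K}$ is harmless.

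The final step is to absorb the mesh-size factor in the jump term using the standing assumption $h_{K} \le 1$, so that $h_{K} \norm{\Bar{v}_{h} - v_{h}}^{2}_{0,\pd K} \le \norm{\Bar{v}_{h} - v_{h}}^{2}_{0,\pd K}$. Collecting the two contributions and absorbing the factors $2$ and $2c$ into a single constant then gives the claimed estimate. I do not anticipate a genuine obstacle here: the proposition is essentially a packaging of the triangle inequality with the polynomial trace inequality~\eqref{eqn:dg_trace}, and the only point requiring a moment's care is to verify that the $h_{K}$ weight is correctly consumed, so that no negative power of $h_{K}$ survives on the right-hand side.
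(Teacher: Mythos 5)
Your proposal is correct and follows exactly the paper's own argument: triangle inequality with $(a+b)^{2}\le 2(a^{2}+b^{2})$, the discrete trace inequality~\eqref{eqn:dg_trace} to convert $\norm{v_{h}}_{0,\pd K}$ into $h_{K}^{-1/2}\norm{v_{h}}_{0,K}$, and the standing assumption $h_{K}\le 1$ to absorb the leftover mesh-size factor. Nothing further is needed.
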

\begin{proof}
Using the triangle inequality and the inverse inequality~\eqref{eqn:dg_trace}:
\begin{equation}
\begin{split}
  h_{K} \norm{\Bar{v_{h}}}^{2}_{0, \pd K}
    =& h_{K} \norm{\Bar{v}_{h} - v_{h} + v_{h}}^{2}_{0, \pd K}
\\
   \le& h_{K} \brac{\norm{\Bar{v}_{h} - v_{h}}_{0, \pd K} + \norm{v_{h}}_{0, \pd K}}^{2}
\\
  \le& 2h_{K} \brac{ \norm{\Bar{v}_{h} - v_{h}}^{2}_{0, \pd K}
         + ch_{K}^{-1} \norm{v_{h}}^{2}_{0, K} }
\\
  \le&  c \brac{\norm{\Bar{v}_{h} - v_{h}}^{2}_{0, \pd K} + \norm{v_{h}}^{2}_{0, K} }.
\end{split}
\end{equation}
\end{proof}
\subsection{Stability}
Stability of the interface stabilised method for hyperbolic problems will be
demonstrated through satisfaction of the  inf-sup condition. Before considering
the inf-sup stability, a number of intermediate results are presented. The analysis
borrows from the approach of \citet{ern:2006} to
discontinuous Galerkin methods (see also  \citet[Section 5.6]{ern:book}). A
similar approach is adopted by \citet{burman:2007}.
\begin{lemma}[coercivity]
For all $\vect{v} \in W^{\star}(h)$
\begin{equation}
  B_{A}\brac{\vect{v}, \vect{v}}
\ge
\mu \norm{v}^{2}_{0, \Omega}
           + \frac{1}{2}\sum_{K} \norm{a_{n}^{\frac{1}{2}} \brac{\Bar{v} - v} }^{2}_{0,\pd K}
           + \frac{1}{2}\norm{a_{n}^{\frac{1}{2}} \Bar{v}}^{2}_{0,\Gamma}.
\label{eqn:coercivity}
\end{equation}
\end{lemma}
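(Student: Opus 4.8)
The plan is to evaluate $B_{A}\brac{\vect{v}, \vect{v}}$ directly from~\eqref{eqn:B_hyperbolic} and reorganise the advective terms into a sum of nonnegative squares by integrating by parts cell-wise. The reactive term $\int_{\Omega}\mu v^{2}\dif x$ is already the first term on the right-hand side of~\eqref{eqn:coercivity}, so everything hinges on the remaining advective contributions. First I would treat the volume term $-\sum_{K}\int_{K}\vect{a} v\cdot\nabla v\dif x$: writing $\vect{a} v\cdot\nabla v = \tfrac{1}{2}\vect{a}\cdot\nabla\brac{v^{2}}$ and integrating by parts on each cell, the divergence-free assumption $\nabla\cdot\vect{a}=0$ kills the interior contribution and leaves $-\tfrac{1}{2}\sum_{K}\int_{\pd K}\vect{a}\cdot\vect{n}\, v^{2}\dif s$.

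Next I would collect all the facet integrals cell by cell and complete the square. On each cell I split $\pd K$ into its outflow part $\pd K_{+}$ and inflow part $\pd K_{-}$, on which $\vect{a}\cdot\vect{n}=a_{n}\ge0$ and $\vect{a}\cdot\vect{n}=-a_{n}<0$ respectively. Combining the boundary piece just produced with the original terms $-\int_{\pd K_{+}}\vect{a}\cdot\vect{n}\, v\brac{\Bar{v}-v}\dif s$ and $-\int_{\pd K_{-}}\vect{a}\cdot\vect{n}\,\Bar{v}\brac{\Bar{v}-v}\dif s$, a short computation shows that the integrand on $\pd K_{+}$ equals $\tfrac{1}{2}a_{n}\brac{\Bar{v}-v}^{2}-\tfrac{1}{2}a_{n}\Bar{v}^{2}$, while on $\pd K_{-}$ it equals $\tfrac{1}{2}a_{n}\brac{\Bar{v}-v}^{2}+\tfrac{1}{2}a_{n}\Bar{v}^{2}$. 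Hence each cell contributes exactly $\tfrac{1}{2}\int_{\pd K}a_{n}\brac{\Bar{v}-v}^{2}\dif s$ — the second group of terms in~\eqref{eqn:coercivity} — plus the leftover $-\tfrac{1}{2}\int_{\pd K_{+}}a_{n}\Bar{v}^{2}\dif s+\tfrac{1}{2}\int_{\pd K_{-}}a_{n}\Bar{v}^{2}\dif s$.

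The step I expect to be the crux is the bookkeeping of these leftover $\Bar{v}^{2}$ terms once summed over all cells and combined with the boundary term $\int_{\Gamma_{+}}\vect{a}\cdot\vect{n}\,\Bar{v}^{2}\dif s$. Because $\Bar{v}$ is single-valued on facets and the cell normal reverses across an interior facet, any interior facet is outflow for one adjacent cell and inflow for the other, so the two leftover contributions $\mp\tfrac{1}{2}\int_{F}a_{n}\Bar{v}^{2}$ cancel exactly. On exterior facets no such cancellation occurs: a facet in $\Gamma_{+}$ leaves $-\tfrac{1}{2}\int_{F}a_{n}\Bar{v}^{2}$ from its cell, which the boundary term $+\int_{F}a_{n}\Bar{v}^{2}$ restores to $+\tfrac{1}{2}\int_{F}a_{n}\Bar{v}^{2}$, while a facet in $\Gamma_{-}$ directly yields $+\tfrac{1}{2}\int_{F}a_{n}\Bar{v}^{2}$. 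Summing over the boundary recovers precisely $\tfrac{1}{2}\norm{a_{n}^{1/2}\Bar{v}}^{2}_{0,\Gamma}$. Assembling the three pieces yields~\eqref{eqn:coercivity} as an equality under the divergence-free hypothesis, and the stated inequality is immediate. The only genuine subtlety is the orientation accounting on interior facets and the matching of the $\Gamma_{+}$ term with the cell contributions, so I would fix the facet convention carefully at the outset.
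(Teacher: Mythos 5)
Your proposal is correct and follows essentially the same route as the paper: the paper's proof simply asserts that integration by parts together with the divergence-free property of $\vect{a}$ and ``straightforward manipulations'' yield the identity $B_{A}\brac{\vect{v},\vect{v}} = \mu\norm{v}^{2}_{0,\Omega} + \tfrac{1}{2}\sum_{K}\norm{a_{n}^{1/2}\brac{\Bar{v}-v}}^{2}_{0,\pd K} + \tfrac{1}{2}\norm{a_{n}^{1/2}\Bar{v}}^{2}_{0,\Gamma}$, and your cell-wise completion of the square plus the facet-orientation bookkeeping is exactly the content of those omitted manipulations. The algebra on $\pd K_{+}$ and $\pd K_{-}$ and the cancellation on interior facets (using single-valuedness of $\Bar{v}$ and the sign reversal of $\vect{a}\cdot\vect{n}$) all check out.
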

\begin{proof}
From the definition of $B_{A}\brac{\vect{v}, \vect{v}}$ and
the fact that $\vect{a}$ is divergence-free, it follows from
the application of integration by parts to~\eqref{eqn:B_hyperbolic}
and
some straightforward manipulations that
\begin{equation}
  B_{A}\brac{\vect{v}, \vect{v}} = \mu \norm{v}^{2}_{0, \Omega}
           + \frac{1}{2}\sum_{K} \norm{a_{n}^{\frac{1}{2}} \brac{\Bar{v} - v} }^{2}_{0, \pd K}
           + \frac{1}{2}\norm{ a_{n}^{\frac{1}{2}} \Bar{v} }^{2}_{0, \Gamma}.
\label{eqn:B_A_vv}
\end{equation}
\end{proof}

As is usual for advection-reaction problems,
$B_{A}\brac{\vect{v}, \vect{v}}$ is coercive with respect to a particular
norm, but the norm
offers no control over derivatives of the solution.

Consider a function $\vect{z}_{h}$ which depends on
$\vect{w}_{h} \in W^{\star}_{h}$ according to
\begin{equation}
\vect{z}_{h} = \brac{z_{h}, 0}
= \brac{-h_{K} \Bar{\vect{a}}_{K} \cdot \nabla w_{h}, 0},
\label{eqn:z_function}
\end{equation}
where $\Bar{\vect{a}}_{K}$ is the average of $\vect{a}$ on cell~$K$.
Lipschitz continuity of
$\vect{a}$ implies the following bound on a cell~$K$~\citep{burman:2007,evans:book}:
%
\begin{equation}
  \norm{\vect{a} - \Bar{\vect{a}}_{K}}_{L^{\infty}\brac{K}}
      \le  c h_{K} \snorm{\vect{a}}_{W_{\infty}^{1}\brac{K}}.
\label{eqn:a_bar_bound}
\end{equation}
\begin{lemma} \label{lemma:w_norm_bound}
If the function $\vect{z}_{h}$ depends on $\vect{w}_{h}$
according to equation~\eqref{eqn:z_function}, then
for all $\vect{w}_{h} \in  W^{\star}_{h}$
there exists a $c_{1} >0$ such that
if $\vect{v}_{h} = c_{1}\vect{w}_{h} + \vect{z}_{h}$, then
\begin{equation}
  \frac{1}{2} \unorm{\vect{w}_{h}}^{2}_{A}   \le
B_{A}\brac{\vect{z}_{h}, \vect{w}_{h}}  + c_{1} B_{A}\brac{\vect{w}_{h}, \vect{w}_{h}}
= B_{A}\brac{\vect{v}_{h}, \vect{w}_{h}}.
\label{eqn:B_bound}
\end{equation}
\end{lemma}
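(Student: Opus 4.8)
The plan is to exploit the linearity of $B_{A}$ in its first argument, which makes the final equality $B_{A}\brac{\vect{v}_{h}, \vect{w}_{h}} = B_{A}\brac{\vect{z}_{h}, \vect{w}_{h}} + c_{1} B_{A}\brac{\vect{w}_{h}, \vect{w}_{h}}$ immediate once $\vect{v}_{h} = c_{1}\vect{w}_{h} + \vect{z}_{h}$. The substance is the lower bound. The coercivity identity \eqref{eqn:B_A_vv} already shows that $B_{A}\brac{\vect{w}_{h}, \vect{w}_{h}}$ controls every term of $\unorm{\vect{w}_{h}}_{A}^{2}$ in \eqref{eqn:stability_norm} \emph{except} the streamline term $\sum_{K} h_{K} \norm{\vect{a} \cdot \nabla w_{h}}^{2}_{0,K}$. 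The whole purpose of the auxiliary function $\vect{z}_{h}$ from \eqref{eqn:z_function} is to recover precisely that missing term, so it suffices to bound $B_{A}\brac{\vect{z}_{h}, \vect{w}_{h}}$ below by a fraction of the streamline term minus quantities that a large multiple $c_{1} B_{A}\brac{\vect{w}_{h}, \vect{w}_{h}}$ can absorb.

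First I would expand $B_{A}\brac{\vect{z}_{h}, \vect{w}_{h}}$ from \eqref{eqn:B_hyperbolic}. Since the facet component of $\vect{z}_{h}$ is zero, the $\pd K_{-}$ and $\Gamma_{+}$ contributions vanish and only three terms survive: the reaction term $\int_{\Omega} \mu z_{h} w_{h} \dif x$, the volume term $-\sum_{K} \int_{K} \vect{a} z_{h} \cdot \nabla w_{h} \dif x$, and the outflow-boundary term $-\sum_{K} \int_{\pd K_{+}} \vect{a} \cdot \vect{n}\, z_{h} \brac{\Bar{w}_{h} - w_{h}} \dif s$. Inserting $z_{h} = -h_{K} \Bar{\vect{a}}_{K} \cdot \nabla w_{h}$ into the volume term gives $\sum_{K} h_{K} \int_{K} \brac{\vect{a} \cdot \nabla w_{h}} \brac{\Bar{\vect{a}}_{K} \cdot \nabla w_{h}} \dif x$, whose leading part is exactly the target $\sum_{K} h_{K} \norm{\vect{a} \cdot \nabla w_{h}}^{2}_{0,K}$.

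The bulk of the work is showing the remaining pieces are lower order. In the volume term I would write $\Bar{\vect{a}}_{K} \cdot \nabla w_{h} = \vect{a} \cdot \nabla w_{h} - \brac{\vect{a} - \Bar{\vect{a}}_{K}} \cdot \nabla w_{h}$ and control the discrepancy by the Lipschitz bound \eqref{eqn:a_bar_bound} and the inverse estimate \eqref{eqn:inverse_gradient}; a Young inequality then splits it into a $\delta$-fraction of the streamline term plus a residual $\tfrac{c}{\delta}\sum_{K} h_{K}\norm{w_{h}}^{2}_{0,K}$. The reaction term is treated identically. For the boundary term I would bound $\norm{a_{n}^{1/2} z_{h}}_{0, \pd K_{+}}$ using $a_{n} \le 1$, the discrete trace inequality \eqref{eqn:dg_trace} and the definition of $z_{h}$, giving $c h_{K}^{1/2} \norm{\vect{a} \cdot \nabla w_{h}}_{0,K}$ up to lower-order terms, then pair it against $\norm{a_{n}^{1/2}\brac{\Bar{w}_{h} - w_{h}}}_{0, \pd K_{+}}$ by Cauchy--Schwarz and Young; this yields another $\delta$-fraction of the streamline term plus a multiple of the jump term, which already lives in the norm. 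Using $h_{K} \le 1$ to pass from $\sum_{K} h_{K}\norm{w_{h}}^{2}_{0,K}$ to $\norm{w_{h}}^{2}_{0,\Omega}$, the collected estimate reads
\[
 B_{A}\brac{\vect{z}_{h}, \vect{w}_{h}} \ge \brac{1 - c\delta}\sum_{K} h_{K}\norm{\vect{a} \cdot \nabla w_{h}}^{2}_{0,K}
  - \frac{c}{\delta}\norm{w_{h}}^{2}_{0,\Omega}
  - \frac{c}{\delta}\sum_{K}\norm{a_{n}^{\frac{1}{2}}\brac{\Bar{w}_{h} - w_{h}}}^{2}_{0, \pd K}.
\]

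To finish I would first fix $\delta$ small enough that $1 - c\delta \ge \tfrac{1}{2}$, and only then choose $c_{1}$ large. By \eqref{eqn:B_A_vv}, $c_{1} B_{A}\brac{\vect{w}_{h}, \vect{w}_{h}}$ supplies $c_{1}\mu\norm{w_{h}}^{2}_{0,\Omega}$, $\tfrac{c_{1}}{2}\sum_{K}\norm{a_{n}^{1/2}\brac{\Bar{w}_{h} - w_{h}}}^{2}_{0, \pd K}$ and $\tfrac{c_{1}}{2}\norm{a_{n}^{1/2}\Bar{w}_{h}}^{2}_{0,\Gamma}$; taking $c_{1}$ large enough to dominate the two negative terms above while retaining at least half of each coercivity contribution delivers $B_{A}\brac{\vect{v}_{h}, \vect{w}_{h}} \ge \tfrac{1}{2}\unorm{\vect{w}_{h}}^{2}_{A}$. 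The hard part is the variable-coefficient error in the volume term: it is the one place where cell-averaging of $\vect{a}$ forces the appeal to \eqref{eqn:a_bar_bound} and the inverse inequality, and where the residual $\frac{c}{\delta}\norm{w_{h}}^{2}_{0,\Omega}$ can only be absorbed through the reaction contribution, so that positivity of the reaction (the relaxed condition $\mu - \tfrac{1}{2}\nabla \cdot \vect{a} > 0$) is essential and $c_{1}$ must be allowed to depend on $\mu$ and on $\snorm{\vect{a}}_{W^{1}_{\infty}}$. The ordering of the constants is equally delicate: $\delta$ must be chosen before $c_{1}$, so that the $\delta$-fractions are absorbed into the streamline term and the $c_{1}$-independent error constants are then absorbed by the coercivity terms.
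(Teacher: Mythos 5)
Your proposal is correct and follows essentially the same route as the paper: expand $B_{A}\brac{\vect{z}_{h}, \vect{w}_{h}}$ from \eqref{eqn:B_hyperbolic} (the $\pd K_{-}$ and $\Gamma_{+}$ terms vanishing since $\Bar{z}_{h}=0$), extract the streamline term $\sum_{K} h_{K}\norm{\vect{a}\cdot\nabla w_{h}}^{2}_{0,K}$ from the volume integral, control the reaction, variable-coefficient and outflow-facet remainders via \eqref{eqn:a_bar_bound}, \eqref{eqn:inverse_gradient}, \eqref{eqn:dg_trace} and Young, and absorb the residuals with $c_{1}$ times the coercivity identity \eqref{eqn:B_A_vv}, with $c_{1}$ depending on $\mu$ and $\snorm{\vect{a}}_{W^{1}_{\infty}}$ exactly as in the paper. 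The only differences are cosmetic (you state a lower bound on $B_{A}\brac{\vect{z}_{h}, \vect{w}_{h}}$ where the paper states an upper bound on the streamline term, and you apply Young to the reaction term where the paper uses Cauchy--Schwarz directly).
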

\begin{proof}
Consider first two bounds on $\norm{z_{h}}_{K}$.
Using equation~\eqref{eqn:a_bar_bound} and
the inverse estimate~\eqref{eqn:inverse_gradient},
\begin{equation}
\begin{split}
  \norm{z_{h}}_{0, K}
      & = \norm{h_{K} \Bar{\vect{a}}_{K} \cdot \nabla w_{h}}_{0, K}
\\
      &\le \norm{h_{K} \vect{a} \cdot \nabla w_{h}}_{0, K}
          + \norm{h_{K}\brac{\vect{a} - \Bar{\vect{a}}_{K}} \cdot \nabla w_{h}}_{0, K}
\\
      &\le \norm{h_{K} \vect{a} \cdot \nabla w_{h}}_{0, K}
          + c h_{K} \snorm{\vect{a}}_{W_{\infty}^{1}\brac{K}} \norm{h_{K} \nabla w_{h}}_{0, K}
\\
      &\le \norm{h_{K} \vect{a} \cdot \nabla w_{h}}_{0, K}
          + c \snorm{\vect{a}}_{W_{\infty}^{1}\brac{K}} \norm{h_{K} w_{h}}_{0, K},
\end{split}
\label{eqn:w_bound1}
\end{equation}
and from the inverse estimate~\eqref{eqn:inverse_gradient}
\begin{equation}
  \norm{z_{h}}_{0, K}  =  \norm{h_{K} \Bar{\vect{a}}_{K} \cdot \nabla w_{h}}_{0, K}
                    \le c \norm{\vect{a}}_{L^{\infty}(K)} \norm{w_{h}}_{0, K}
                    \le c \norm{w_{h}}_{0, K}.
\label{eqn:w_bound2}
\end{equation}

From the definition of the bilinear form in equation~\eqref{eqn:B_hyperbolic},
\begin{multline}
  \sum_{K} h_{K} \norm{\vect{a} \cdot \nabla w_{h}}^{2}_{0, K}
  = B_{A}\brac{\vect{z}_{h}, \vect{w}_{h}}
    + \sum_{K} h_{K} \int_{K} \mu \Bar{\vect{a}}_{K} \cdot \brac{\nabla w_{h}} w_{h} \dif x
\\
    + \sum_{K} h_{K}  \int_{K}\brac{\vect{a} \cdot \nabla w_{h}} \brac{\vect{a}
                   - \Bar{\vect{a}}}_{K} \cdot \nabla w_{h} \dif x
\\
       -  \sum_{K}h_{K}\int_{\pd K_{+}} a_{n}  \brac{\Bar{w}_{h} - w_{h}}
                       \Bar{\vect{a}}_{K}\cdot\nabla w_{h} \dif s.
\end{multline}
Applying the Cauchy-Schwarz inequality to the various terms on the right-hand side,
\begin{multline}
  \sum_{K} h_{K} \norm{\vect{a} \cdot \nabla w_{h}}^{2}_{0, K}
  \le B_{A}\brac{\vect{z}_{h}, \vect{w}_{h}}
    + \sum_{K} \norm{\mu w_{h} }_{0, K} \norm{h_{K} \Bar{\vect{a}}_{K} \cdot \nabla w_{h}}_{0, K}
\\
    + \sum_{K} \norm{h_{K}^{\frac{1}{2}}\vect{a}\cdot\nabla w_{h}}_{0, K}
      \norm{h_{K}^{\frac{1}{2}}\brac{\vect{a} - \Bar{\vect{a}}}_{K} \cdot \nabla w_{h}}_{0, K}
\\
   + \sum_{K} \norm{h_{K} \Bar{\vect{a}}_{K}\cdot\nabla w_{h}}_{0, \pd K_{+}}
             \norm{a_{n}\brac{\Bar{w}_{h} - w_{h}}}_{0, \pd K_{+}}.
\end{multline}
Each term is now appropriately bounded.
Using equation~\eqref{eqn:w_bound2},
\begin{equation}
  \sum_{K} \norm{\mu w_{h}}_{0, K} \norm{h_{K} \Bar{\vect{a}}_{K} \cdot \nabla w_{h}}_{0, K}
   \le c \mu \sum_{K} \norm{w_{h}}^{2}_{0, K}.
\end{equation}
Setting $R_{2} = \norm{h_{K}^{\frac{1}{2}}\vect{a}\cdot\nabla w_{h}}_{0, K}
      \norm{h_{K}^{\frac{1}{2}}\brac{\vect{a} - \Bar{\vect{a}}_{K}}\cdot \nabla w_{h}}_{0, K}$
and using \eqref{eqn:a_bar_bound}, an inverse inequality and Young's
inequality,
\begin{equation}
\begin{split}
    R_{2}
 \le& c \norm{h_{K}^{\frac{1}{2}}\vect{a}\cdot\nabla w_{h}}_{0, K}
             \norm{\vect{a} - \Bar{\vect{a}}_{K}}_{L^{\infty}(K)}
      \norm{h_{K}^{\frac{1}{2}}\nabla w_{h}}_{0, K}
\\
  \le& c h_{K} \snorm{\vect{a}}_{W^{1}_{\infty}(K)} \norm{h_{K}^{\frac{1}{2}}\vect{a}\cdot\nabla w_{h}}_{0, K}
                     \norm{h_{K}^{\frac{1}{2}}\nabla w_{h}}_{0, K}
\\
   \le& c h_{K} \snorm{\vect{a}}_{W^{1}_{\infty}(K)}\brac{\frac{1}{2\epsilon_{2}} \norm{\vect{a}\cdot\nabla w_{h}}^{2}_{0, K}
         +  \frac{\epsilon_{2}}{2} \norm{w_{h}}^{2}_{0, K}},
\end{split}
\end{equation}
where $\epsilon_{2} > 0$ but is otherwise arbitrary. Setting
$\epsilon_{2} = 2c\snorm{\vect{a}}_{W^{1}_{\infty}(K)}$
\begin{equation}
    R_{2}
   \le \frac{1}{4} h_{K} \norm{\vect{a}\cdot\nabla w_{h}}^{2}_{0, K}
         +  c h_{K} \snorm{\vect{a}}_{W^{1}_{\infty}(K)}^{2} \norm{w_{h}}^{2}_{0, K}.
\end{equation}
Setting $R_{3} =
         \norm{ h_{K} \Bar{\vect{a}}_{K}\cdot\nabla w_{h}}_{0, \pd K_{+}}
              \norm{a_{n}\brac{\Bar{w}_{h} - w_{h}}}_{0, \pd K_{+}}$
and using equation~\eqref{eqn:w_bound1} and Young's inequality,
\begin{equation}
\begin{split}
  R_{3}
\le&  c h^{-\frac{1}{2}}_{K}\norm{ h_{K} \Bar{\vect{a}}_{K}\cdot\nabla w_{h}}_{0, K}
              \norm{a_{n}\brac{\Bar{w}_{h} - w_{h}}}_{0, \pd K_{+}}
\\
  \le&
   c \brac{\norm{ h^{\frac{1}{2}}_{K} \vect{a}\cdot\nabla w_{h}}_{0, K}
    + \snorm{\vect{a}}_{W^{1}_{\infty}(K)} \norm{h^{\frac{1}{2}}_{K} w_{h}}_{0, K}}
              \norm{a_{n}\brac{\Bar{w}_{h} - w_{h}}}_{0, \pd K_{+}}
\\
  \le&
   \frac{c}{2\epsilon_{3}}\brac{\norm{ h^{\frac{1}{2}}_{K} \vect{a}\cdot\nabla w_{h}}_{0, K}
    + \snorm{\vect{a}}_{W^{1}_{\infty}(K)} \norm{h^{\frac{1}{2}}_{K} w_{h}}_{0, K}}^{2}
           + \frac{c\epsilon_{3}}{2}   \norm{a_{n}\brac{\Bar{w}_{h} - w_{h}}}_{0, \pd K_{+}}^{2}
\\
  \le&
   \frac{ch_{K}}{\epsilon_{3}}\brac{\norm{\vect{a}\cdot\nabla w_{h}}_{0, K}^{2}
    + \snorm{\vect{a}}_{W^{1}_{\infty}(K)}^{2} \norm{w_{h}}_{0, K}^{2}}
   + \frac{c\epsilon_{3}}{2} \norm{a_{n}\brac{\Bar{w}_{h} - w_{h}}}_{0, \pd K_{+}}^{2},
\end{split}
\end{equation}
where $\epsilon_{3} > 0$ but is otherwise arbitrary.
Setting $\epsilon_{3} = 4c$,
\begin{multline}
  R_{3}
  \le
   \frac{1}{4} h_{K}\norm{ \vect{a}\cdot\nabla w_{h}}_{0, K}^{2}
\\
    + c\brac{h_{K} \snorm{\vect{a}}_{W^{1}_{\infty}(K)}^{2} \norm{w_{h}}_{0, K}^{2}
           + \norm{\vect{a}}_{L^{\infty}(\Omega)} \norm{a^{\frac{1}{2}}_{n}\brac{\Bar{w}_{h} - w_{h}}}_{0, \pd K_{+}}^{2}}.
\end{multline}
Combining these results leads to
\begin{multline}
  \frac{1}{2} \sum_{K} h_{K} \norm{ \vect{a} \cdot \nabla w_{h}}^{2}_{0, K}
  \le
B_{A}\brac{\vect{z}_{h}, \vect{w}_{h}}
\\
+  c\max\brac{\norm{\vect{a}}_{L^{\infty}(\Omega)}, \frac{h_{K}\snorm{\vect{a}}_{W^{1}_{\infty}(\Omega)}^{2}}{\mu} } \brac{\mu\norm{w_{h}}^{2}_{0, \Omega}
+  \sum_{K}  \norm{a_{n}^{\frac{1}{2}}\brac{\Bar{w}_{h} - w_{h}}}^{2}_{0, \pd K}}.
\end{multline}
From the above result, the definition of the norm in~\eqref{eqn:stability_norm}
and coercivity~\eqref{eqn:coercivity}, the lemma follows straightforwardly
with
$c_{1} = c\max\brac{1, \snorm{\vect{a}}_{W^{1}_{\infty}(\Omega)}^{2} / \mu }$.
\end{proof}

\begin{proposition} \label{lemma:z_inequality}
For $\vect{z}_{h}$ which depends on $\vect{w}_{h}$ according to
equation~\eqref{eqn:z_function},
there exists a $c_{2} > 0$ such that for all
$\vect{w}_{h} \in W^{\star}_{h}$
\begin{equation}
  \unorm{\vect{z}_{h}}_{A} \le c_{2} \unorm{\vect{w}_{h}}_{A}.
\label{eqn:z_inequality}
\end{equation}
\end{proposition}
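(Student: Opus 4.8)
The plan is to bound each of the three surviving terms in $\unorm{\vect{z}_{h}}^{2}_{A}$ separately and collect them against $\unorm{\vect{w}_{h}}^{2}_{A}$. Since $\vect{z}_{h} = \brac{z_{h}, 0}$, the interface component is zero, so the exterior boundary contribution $\norm{a_{n}^{\frac{1}{2}}\Bar{z}_{h}}^{2}_{0,\Gamma}$ vanishes and the facet term $\norm{a_{n}^{\frac{1}{2}}\brac{\Bar{z}_{h} - z_{h}}}^{2}_{0,\pd K}$ collapses to $\norm{a_{n}^{\frac{1}{2}} z_{h}}^{2}_{0,\pd K}$. From the definition~\eqref{eqn:stability_norm} it therefore remains only to control $\mu\norm{z_{h}}^{2}_{0,\Omega}$, $\sum_{K} h_{K}\norm{\vect{a}\cdot\nabla z_{h}}^{2}_{0,K}$, and $\sum_{K}\norm{a_{n}^{\frac{1}{2}} z_{h}}^{2}_{0,\pd K}$. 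The reaction term is immediate: summing the bound~\eqref{eqn:w_bound2} over cells gives $\mu\norm{z_{h}}^{2}_{0,\Omega}\le c\mu\norm{w_{h}}^{2}_{0,\Omega}$, which is already one of the terms of $\unorm{\vect{w}_{h}}^{2}_{A}$.

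For the volume gradient term I would first apply the inverse estimate~\eqref{eqn:inverse_gradient} together with $\norm{\vect{a}}_{L^{\infty}(\Omega)}\le 1$ to obtain $\norm{\vect{a}\cdot\nabla z_{h}}_{0,K}\le c h_{K}^{-1}\norm{z_{h}}_{0,K}$, and then substitute $\norm{z_{h}}_{0,K}=h_{K}\norm{\Bar{\vect{a}}_{K}\cdot\nabla w_{h}}_{0,K}$. The crucial point is that the factor $h_{K}^{-1}$ produced by the inverse estimate cancels the factor $h_{K}$ carried by the definition of $z_{h}$, leaving $\norm{\vect{a}\cdot\nabla z_{h}}_{0,K}\le c\norm{\Bar{\vect{a}}_{K}\cdot\nabla w_{h}}_{0,K}$ with no negative power of $h_{K}$. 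Splitting $\Bar{\vect{a}}_{K}\cdot\nabla w_{h} = \vect{a}\cdot\nabla w_{h} + \brac{\Bar{\vect{a}}_{K}-\vect{a}}\cdot\nabla w_{h}$ and using~\eqref{eqn:a_bar_bound} followed by the inverse estimate on the remainder then yields $h_{K}\norm{\vect{a}\cdot\nabla z_{h}}^{2}_{0,K}\le c h_{K}\norm{\vect{a}\cdot\nabla w_{h}}^{2}_{0,K} + c h_{K}\snorm{\vect{a}}^{2}_{W^{1}_{\infty}(K)}\norm{w_{h}}^{2}_{0,K}$. The facet term is handled in the same spirit, but the discrete trace inequality~\eqref{eqn:dg_trace} must be applied directly to the polynomial $\Bar{\vect{a}}_{K}\cdot\nabla w_{h}$, rather than to $z_{h}$ after a further inverse estimate, so as to preserve the $h_{K}$ scaling. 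Using $a_{n}\le 1$ this gives $\norm{a_{n}^{\frac{1}{2}} z_{h}}^{2}_{0,\pd K}\le\norm{z_{h}}^{2}_{0,\pd K}=h_{K}^{2}\norm{\Bar{\vect{a}}_{K}\cdot\nabla w_{h}}^{2}_{0,\pd K}\le c h_{K}\norm{\Bar{\vect{a}}_{K}\cdot\nabla w_{h}}^{2}_{0,K}$, and the same splitting produces $c h_{K}\norm{\vect{a}\cdot\nabla w_{h}}^{2}_{0,K}+c h_{K}\snorm{\vect{a}}^{2}_{W^{1}_{\infty}(K)}\norm{w_{h}}^{2}_{0,K}$.

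Collecting the three estimates, $\unorm{\vect{z}_{h}}^{2}_{A}$ is bounded by a constant multiple of $\mu\norm{w_{h}}^{2}_{0,\Omega}+\sum_{K} h_{K}\norm{\vect{a}\cdot\nabla w_{h}}^{2}_{0,K}+\sum_{K} h_{K}\snorm{\vect{a}}^{2}_{W^{1}_{\infty}(K)}\norm{w_{h}}^{2}_{0,K}$. The first two pieces are controlled directly by $\unorm{\vect{w}_{h}}^{2}_{A}$; the third, exactly as in the proof of Lemma~\ref{lemma:w_norm_bound}, is absorbed into the reaction term by writing $h_{K}\snorm{\vect{a}}^{2}_{W^{1}_{\infty}(K)}\norm{w_{h}}^{2}_{0,K} = \brac{h_{K}\snorm{\vect{a}}^{2}_{W^{1}_{\infty}(K)}/\mu}\,\mu\norm{w_{h}}^{2}_{0,K}$ and using $h_{K}\le 1$, which delivers the constant $c_{2} = c\max\brac{1,\snorm{\vect{a}}^{2}_{W^{1}_{\infty}(\Omega)}/\mu}$, consistent with the $c_{1}$ of Lemma~\ref{lemma:w_norm_bound}.

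The main obstacle lies in the volume and facet gradient terms. A careless application of the inverse and trace estimates would reduce $z_{h}$ all the way down to $\norm{w_{h}}_{0,K}$ and leave a stray factor $h_{K}^{-1}\norm{w_{h}}^{2}_{0,K}$, a quantity that is controlled by the continuity norm $\unorm{\cdot}_{A^{\prime}}$ but \emph{not} by the stability norm $\unorm{\cdot}_{A}$. Keeping each estimate at the level of $\norm{\Bar{\vect{a}}_{K}\cdot\nabla w_{h}}_{0,K}$, so that the $h_{K}$ carried by the definition of $z_{h}$ survives to cancel the negative powers generated by the inverse and trace inequalities, is precisely what makes the bound land in the correct norm.
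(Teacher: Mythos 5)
Your proof is correct and follows essentially the same route as the paper: term-by-term bounding of the surviving components of $\unorm{\vect{z}_{h}}_{A}^{2}$ via the inverse estimate, the splitting $\Bar{\vect{a}}_{K}\cdot\nabla w_{h}=\vect{a}\cdot\nabla w_{h}+(\Bar{\vect{a}}_{K}-\vect{a})\cdot\nabla w_{h}$ with \eqref{eqn:a_bar_bound}, the discrete trace inequality for the facet term, and absorption of the $\snorm{\vect{a}}_{W^{1}_{\infty}}^{2}\norm{w_{h}}_{0,K}^{2}$ remainder into the reaction term. The only cosmetic discrepancy is that your final $c_{2}$ is the constant for the squared inequality; since the proposition is stated for the unsquared norms, it should be $c\max\brac{1,\snorm{\vect{a}}_{W^{1}_{\infty}(\Omega)}/\mu^{1/2}}$, as in the paper.
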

\begin{proof}
The components of $\unorm{\vect{z}_{h}}_{A}$ can be bounded term-by-term.
Using equation~\eqref{eqn:w_bound2},
\begin{equation}
\mu \norm{z_{h}}^{2}_{0, K}
  = \mu \norm{h_{K} \Bar{\vect{a}}_{K} \cdot \nabla w_{h}}^{2}_{0, K}
\le c \mu \norm{\vect{a}}^{2}_{L^{\infty}(\Omega)}\norm{w_{h}}^{2}_{0, K}.
\end{equation}
Using the inverse inequality~\eqref{eqn:inverse_gradient} and
equation~\eqref{eqn:w_bound1},
\begin{equation}
\begin{split}
h_{K}  \norm{\vect{a} \cdot \nabla z_{h}}^{2}_{0, K}
&\le c h^{-1}_{K} \norm{z_{h}}^{2}_{0, K}
\\
&\le
  c \brac{h_{K} \norm{\vect{a} \cdot \nabla w_{h}}^{2}_{0, K}
      + h_{K} \snorm{\vect{a}}^{2}_{W^{1}_{\infty}(K)}\norm{w_{h}}^{2}_{0, K}}.
\end{split}
\end{equation}
For the facet term,
\begin{multline}
\norm{a_{n}^{\frac{1}{2}} h_{K}\Bar{\vect{a}}_{K} \cdot \nabla w_{h}}^{2}_{0, \pd K}
\\
\le
ch_{K}\norm{a_{n}}_{L^{\infty}(\pd K)}\brac{h_{K}\norm{ \vect{a} \cdot \nabla w_{h}}^{2}_{0, K}
      +  h_{K} \snorm{\vect{a}}^{2}_{W^{1}_{\infty}(K)} \norm{w_{h}}^{2}_{0, K}}.
\end{multline}
This proves that
$\unorm{\vect{z}_{h}}_{A}
   \le c \max\brac{\norm{\vect{a}}_{L^{\infty}(\Omega)}, \brac{h\snorm{\vect{a}}^{2}_{W^{1}_{\infty}(\Omega)}/\mu}^{\frac{1}{2}}}
   \unorm{\vect{w}_{h}}_{A}$, with
$c_{2}
 = c\max\brac{\norm{\vect{a}}_{L^{\infty}(\Omega)}, \snorm{\vect{a}}_{W^{1}_{\infty}(\Omega)}/\mu^{\frac{1}{2}}}$.
\end{proof}

Setting $\vect{v}_{h} = c_{1}\vect{w}_{h} + \vect{z}_{h}$,
the preceding proposition also implies that
\begin{equation}
  \unorm{\vect{v}_{h}}_{A} = \unorm{c_{1}\vect{w}_{h} + \vect{z}_{h}}_{A}
      \le \brac{c_{1} + c_{2}} \unorm{\vect{w}_{h}}_{A}.
\label{eqn:v_inequality}
\end{equation}
Now, using the preceding two results, the demonstration of
inf-sup stability is straightforward.
\begin{lemma}[inf-sup stability] \label{lemma:inf_sup}
There exists a $\beta_{A} > 0$, which is independent of $h$,
such that for all $\vect{v}_{h} \in W^{\star}_{h}$
\begin{equation}
  \sup_{\vect{w}_{h} \in W^{\star}_{h}}
      \frac{B_{A}\brac{\vect{v}_{h}, \vect{w}_{h}}}{\unorm{\vect{w}_{h}}_{A}}
                       \ge \beta_{A} \unorm{\vect{v}_{h}}_{A}.
\label{eqn:inf_sup}
\end{equation}
\end{lemma}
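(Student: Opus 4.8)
The preceding two results are designed to be combined directly, so the plan is to assemble Lemma~\ref{lemma:w_norm_bound} and~\eqref{eqn:v_inequality} rather than to introduce new estimates. The mechanism I would use is the following. Fix $\vect{w}_{h} \in W^{\star}_{h}$ and let $\vect{v}_{h} = c_{1}\vect{w}_{h} + \vect{z}_{h}$ be the associated function of Lemma~\ref{lemma:w_norm_bound}, with $\vect{z}_{h}$ given by~\eqref{eqn:z_function}. Estimate~\eqref{eqn:B_bound} supplies a lower bound on the numerator, $B_{A}\brac{\vect{v}_{h}, \vect{w}_{h}} \ge \tfrac{1}{2}\unorm{\vect{w}_{h}}_{A}^{2}$, while~\eqref{eqn:v_inequality} controls the test function through $\unorm{\vect{v}_{h}}_{A} \le \brac{c_{1} + c_{2}}\unorm{\vect{w}_{h}}_{A}$. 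Dividing the first bound by $\unorm{\vect{v}_{h}}_{A}$ and inserting the second identifies $\beta_{A} = 1 / \brac{2\brac{c_{1} + c_{2}}}$ as the natural inf--sup constant, which is independent of $h$ because $c_{1}$ and $c_{2}$ are (they depend only on $\vect{a}$ and $\mu$).

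The one point requiring care is that this construction produces the first-slot function $\vect{v}_{h}$ out of a given $\vect{w}_{h}$, whereas the statement fixes $\vect{v}_{h}$ and takes the supremum over the second slot. I would close this gap by observing that the linear map $\Phi : \vect{w}_{h} \mapsto c_{1}\vect{w}_{h} + \vect{z}_{h}\brac{\vect{w}_{h}}$ is a bijection of the finite-dimensional space $W^{\star}_{h}$. Indeed $\Phi$ is block triangular: on the interface component it acts as $\Bar{w}_{h} \mapsto c_{1}\Bar{w}_{h}$, which is invertible since $c_{1} > 0$; on the cell component it acts as $w_{h} \mapsto c_{1} w_{h} - h_{K} \Bar{\vect{a}}_{K} \cdot \nabla w_{h}$, and $\Bar{\vect{a}}_{K} \cdot \nabla$ lowers polynomial degree and is therefore nilpotent on $P_{k}(K)$, so $c_{1} I - h_{K}\Bar{\vect{a}}_{K}\cdot\nabla$ is invertible. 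Consequently, for the fixed $\vect{v}_{h}$ there is a unique $\vect{w}_{h} = \Phi^{-1}\brac{\vect{v}_{h}}$, and choosing it as the test function gives
\[
\sup_{\vect{w}_{h}^{\prime} \in W^{\star}_{h}} \frac{B_{A}\brac{\vect{v}_{h}, \vect{w}_{h}^{\prime}}}{\unorm{\vect{w}_{h}^{\prime}}_{A}} \ge \frac{B_{A}\brac{\vect{v}_{h}, \vect{w}_{h}}}{\unorm{\vect{w}_{h}}_{A}} \ge \frac{\tfrac{1}{2}\unorm{\vect{w}_{h}}_{A}^{2}}{\unorm{\vect{w}_{h}}_{A}} = \tfrac{1}{2}\unorm{\vect{w}_{h}}_{A} \ge \beta_{A}\unorm{\vect{v}_{h}}_{A},
\]
where the second inequality is~\eqref{eqn:B_bound} (valid because $\vect{v}_{h} = c_{1}\vect{w}_{h} + \vect{z}_{h}$ by construction) and the last uses~\eqref{eqn:v_inequality} rearranged as $\unorm{\vect{w}_{h}}_{A} \ge \brac{c_{1}+c_{2}}^{-1}\unorm{\vect{v}_{h}}_{A}$.

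I expect no real analytical obstacle here: the substantive work, namely the bounds on $\vect{z}_{h}$ and the coercivity estimate, is already discharged in Lemma~\ref{lemma:w_norm_bound} and Proposition~\ref{lemma:z_inequality}. The only genuinely subtle step is the slot interchange described above; alternatively one may invoke the standard fact that, for a square finite-dimensional problem equipped with a single norm, the inf--sup constant is unchanged when the roles of the two arguments are swapped, so that the directly proved bound $\sup_{\vect{v}_{h}} B_{A}\brac{\vect{v}_{h}, \vect{w}_{h}} / \unorm{\vect{v}_{h}}_{A} \ge \beta_{A}\unorm{\vect{w}_{h}}_{A}$ already implies~\eqref{eqn:inf_sup}. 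It is worth recording that $\beta_{A}$ inherits the dependence of $c_{1}, c_{2}$ on $\mu$ and degenerates as $\mu \to 0$, which is consistent with the reaction term being the source of $L^{2}$ control in the stability norm~\eqref{eqn:stability_norm}.
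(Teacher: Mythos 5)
Your proof is correct and follows essentially the same route as the paper: it combines Lemma~\ref{lemma:w_norm_bound} with Proposition~\ref{lemma:z_inequality} (via~\eqref{eqn:v_inequality}) to obtain $\beta_{A} = 1/\brac{2\brac{c_{1}+c_{2}}}$. The one place you go beyond the paper is the explicit verification that $\Phi : \vect{w}_{h} \mapsto c_{1}\vect{w}_{h} + \vect{z}_{h}\brac{\vect{w}_{h}}$ is a bijection of $W^{\star}_{h}$ (block structure plus nilpotency of $\Bar{\vect{a}}_{K}\cdot\nabla$ on $P_{k}(K)$); the paper simply writes ``for non-trivial $\vect{v}_{h} = c_{1}\vect{w}_{h} + \vect{z}_{h}$'' and leaves the interchange of the two slots implicit, so your argument actually closes a small gap that the published proof glosses over.
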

\begin{proof}
For non-trivial $\vect{v}_{h} = c_{1}\vect{w}_{h} + \vect{z}_{h}$,
combining Lemma~\ref{lemma:w_norm_bound} and
Proposition~\ref{lemma:z_inequality}
(see also equation~\eqref{eqn:v_inequality}) yields
\begin{equation}
  \unorm{\vect{v}_{h}}_{A}
  \unorm{\vect{w}_{h}}_{A}
  \le
  \brac{c_{1} + c_{2}} \unorm{\vect{w}_{h}}_{A}^{2}
  \le
 2\brac{c_{1} + c_{2}} B_{A}\brac{\vect{v}_{h}, \vect{w}_{h}},
\end{equation}
which implies that for
$\beta_{A} = 1/ \brac{2\brac{c_{1} + c_{2}}}$,
there exists a function $\vect{w}_{h} \in W^{\star}_{h}$ such that
\begin{equation}
  \beta_{A} \unorm{\vect{v}_{h}}_{A} \le
      \frac{B_{A}\brac{\vect{v}_{h}, \vect{w}_{h}}}{\unorm{\vect{w}_{h}}_{A}}
  \quad \forall \vect{v}_{h} \in W^{\star}_{h}.
\end{equation}
This is satisfaction of the inf-sup condition.
\end{proof}

Note the dependence of $\beta_{A}$ on the problem data;
it becomes smaller as gradients in $\vect{a}$
become large and as $\mu$ becomes small.
In practice, this is a rather pessimistic scenario since often
additional $L^{2}$ control will be provided by the prescription of the solution
at inflow boundaries. Numerical experiments with $\mu = 0$ are usually
observed to be stable.
\subsection{Error analysis}
\label{sec:hyperbolic_error}
To reach an error estimate, continuity of the bilinear form with respect to
the norms defined in equations~\eqref{eqn:stability_norm}
and~\eqref{eqn:error_norm} is required.
It is the continuity requirement which necessitates the introduction
of the norm $\unorm{\cdot}_{A^{\prime}}$ in addition to the stability
norm $\unorm{\cdot}_{A}$.
\begin{lemma}[continuity] \label{lemma:hyperbolic_continuity}
There exists a $C_{A} > 0$, which is independent of $h$,
such that for all $\vect{w} \in W^{\star}(h)$
and for all $\vect{v}_{h} \in W_{h}^{\star}$
\begin{equation}
  |B_{A}\brac{\vect{w}, \vect{v}_{h}}|
     \le C_{A} \unorm{\vect{w}}_{A^{\prime}} \unorm{\vect{v}_{h}}_{A}.
\label{eqn:continuity}
\end{equation}
\end{lemma}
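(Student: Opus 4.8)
The plan is to bound $B_{A}\brac{\vect{w}, \vect{v}_{h}}$ term by term, using nothing more than the Cauchy--Schwarz inequality on each integral together with the discrete Cauchy--Schwarz inequality over cells and facets. The key structural observation is that the continuity norm $\unorm{\cdot}_{A^{\prime}}$ in~\eqref{eqn:error_norm} was defined precisely so that every factor involving $\vect{w}$ which appears in the five terms of~\eqref{eqn:B_hyperbolic} is individually controlled, whereas the stability norm $\unorm{\cdot}_{A}$ of~\eqref{eqn:stability_norm} suffices for the corresponding factor involving the finite element function $\vect{v}_{h}$. No inverse or trace inequality should be needed, because $\unorm{\cdot}_{A^{\prime}}$ already carries the facet traces of $\vect{w}$ and its $h_{K}^{-1}$-weighted volume norm as explicit ingredients.

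First I would treat the two volume terms. The reactive term is immediate: $\left|\int_{\Omega} \mu w v_{h} \dif x\right| \le \brac{\mu^{1/2}\norm{w}_{0,\Omega}}\brac{\mu^{1/2}\norm{v_{h}}_{0,\Omega}}$, the first factor sitting in $\unorm{\vect{w}}_{A^{\prime}}$ through $\unorm{\cdot}_{A}$ and the second in $\unorm{\vect{v}_{h}}_{A}$. The advective volume term is the one place where the extra content of $\unorm{\cdot}_{A^{\prime}}$ is essential: Cauchy--Schwarz gives $\left|\int_{K} \vect{a} w\cdot\nabla v_{h} \dif x\right| \le \norm{w}_{0,K}\norm{\vect{a}\cdot\nabla v_{h}}_{0,K}$, which I would split as $\brac{h_{K}^{-1/2}\norm{w}_{0,K}}\brac{h_{K}^{1/2}\norm{\vect{a}\cdot\nabla v_{h}}_{0,K}}$ and sum over $K$ by discrete Cauchy--Schwarz. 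The first resulting factor is exactly the term $\sum_{K} h_{K}^{-1}\norm{w}^{2}_{0,K}$ that distinguishes $\unorm{\cdot}_{A^{\prime}}$ from $\unorm{\cdot}_{A}$, which is why the stability norm alone cannot deliver continuity and $\vect{w}$ must be measured in $\unorm{\cdot}_{A^{\prime}}$.

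Next I would handle the three boundary terms, using only that $\pd K_{\pm}$ are disjoint pieces of $\pd K$ so that each partial facet norm is bounded by the full one. On $\pd K_{+}$ one has $a_{n} = \vect{a}\cdot\vect{n}$, and Cauchy--Schwarz yields $\left|\int_{\pd K_{+}}\vect{a}\cdot\vect{n}\, w\brac{\Bar{v}_{h} - v_{h}} \dif s\right| \le \norm{a_{n}^{1/2} w}_{0,\pd K_{+}}\norm{a_{n}^{1/2}\brac{\Bar{v}_{h} - v_{h}}}_{0,\pd K_{+}}$; the $\vect{w}$-factor is the $\sum_{K}\norm{a_{n}^{1/2} v}^{2}_{0,\pd K_{+}}$ contribution to $\unorm{\cdot}_{A^{\prime}}$ and the $\vect{v}_{h}$-factor is bounded by $\norm{a_{n}^{1/2}\brac{\Bar{v}_{h}-v_{h}}}_{0,\pd K}$ from $\unorm{\cdot}_{A}$. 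Identically, the $\pd K_{-}$ term pairs $\norm{a_{n}^{1/2}\Bar{w}}_{0,\pd K_{-}}$, the $\sum_{K}\norm{a_{n}^{1/2}\Bar{v}}^{2}_{0,\pd K_{-}}$ piece of $\unorm{\cdot}_{A^{\prime}}$, against $\norm{a_{n}^{1/2}\brac{\Bar{v}_{h}-v_{h}}}_{0,\pd K}$, and the exterior $\Gamma_{+}$ term pairs $\norm{a_{n}^{1/2}\Bar{w}}_{0,\Gamma}$ against $\norm{a_{n}^{1/2}\Bar{v}_{h}}_{0,\Gamma}$, both already present in the respective norms.

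Finally I would collect the five bounds and apply a discrete Cauchy--Schwarz over the fixed, finite collection of terms to factor the total into $\unorm{\vect{w}}_{A^{\prime}}\unorm{\vect{v}_{h}}_{A}$, absorbing the resulting numerical constant into $C_{A}$. I do not expect a genuine obstacle here: the only care required is the routine bookkeeping of matching each factor to the correct norm and confirming that $C_{A}$ is genuinely independent of $h$, which it is, since every estimate above is a pointwise Cauchy--Schwarz with no mesh-dependent prefactor.
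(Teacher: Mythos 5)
Your proposal is correct and follows essentially the same route as the paper: a term-by-term Cauchy--Schwarz estimate on the five contributions to $B_{A}$, with the advective volume term split as $h_{K}^{-1/2}\norm{w}_{0,K}\, h_{K}^{1/2}\norm{\vect{a}\cdot\nabla v_{h}}_{0,K}$ and the facet terms paired exactly as in the paper's proof. Your accompanying explanation of why the extra terms in $\unorm{\cdot}_{A^{\prime}}$ are needed matches the role they play there.
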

\begin{proof}
From the definition of the bilinear form:
\begin{equation}
\begin{split}
  |B_{A}\brac{\vect{w}, \vect{v}_{h}}|
    =&
    \left| \int_{\Omega} \mu w v_{h} \dif x
- \sum_{K} \int_{K} \vect{a} w \cdot  \nabla v_{h} \dif x
 - \sum_{K} \int_{\pd K_{+}} a_{n} w  \brac{\Bar{v}_{h} - v_{h}} \dif s
\right.
\\
&+ \left. \sum_{K}\int_{\pd K_{-}} a_{n} \Bar{w}  \brac{\Bar{v}_{h} - v_{h}} \dif s
  + \int_{\Gamma_{+}} a_{n} \Bar{w} \Bar{v}_{h}  \dif s
\right|
\\
  \le& \sum_{K} \norm{w}_{0, K}\brac{\mu \norm{v_{h}}_{0, K} + \norm{\vect{a} \cdot \nabla v_{h}}_{0, K}}
\\
&+ \sum_{K}  \norm{a^{\frac{1}{2}}_{n} w}_{0,\pd K_{+}} \norm{a^{\frac{1}{2}}_{n}\brac{\Bar{v}_{h} - v_{h}}}_{0,\pd K_{+}}
\\
&+ \sum_{K}  \norm{a^{\frac{1}{2}}_{n}\Bar{w}}_{0,\pd K_{-}} \norm{a^{\frac{1}{2}}_{n}\brac{\Bar{v}_{h} - v_{h}}}_{0,\pd K_{-}}
\\
   &+ \norm{a_{n}^{\frac{1}{2}}\Bar{w}}_{0,\Gamma_{+}} \norm{a_{n}^{\frac{1}{2}}\Bar{v}_{h}}_{0,\Gamma_{+}}.
\end{split}
\end{equation}
Now, bounding each term,
\begin{align}
  \sum_{K} \mu \norm{w}_{0,K}\norm{v_{h}}_{0,K}
      &\le \unorm{\vect{w}}_{A^{\prime}} \unorm{\vect{v}_{h}}_{A},
\\
  \sum_{K} h_{K}^{-\frac{1}{2}}\norm{w}_{0,K} h_{K}^{\frac{1}{2}}\norm{\vect{a}\cdot \nabla v_{h}}_{0,K}
      &\le \unorm{\vect{w}}_{A^{\prime}} \unorm{\vect{v}_{h}}_{A},
\\
  \sum_{K} \norm{a_{n}^{\frac{1}{2}} w}_{0,\pd K_{+}} \norm{a^{\frac{1}{2}}_{n}\brac{\Bar{v}_{h} - v_{h}}}_{0,\pd K_{+}}
    &\le \unorm{\vect{w}}_{A^{\prime}} \unorm{\vect{v}_{h}}_{A},
\\
  \sum_{K} \norm{a^{\frac{1}{2}}_{n} \Bar{w}}_{0,\pd K_{-}} \norm{a^{\frac{1}{2}}_{n}\brac{\Bar{v}_{h} - v_{h}}}_{0,\pd K_{-}}
      &\le \unorm{\vect{w}}_{A^{\prime}} \unorm{\vect{v}_{h}}_{A},
\\
    \norm{a_{n}^{\frac{1}{2}}\Bar{w}}_{0,\Gamma_{+}} \norm{a_{n}^{\frac{1}{2}}\Bar{v}_{h}}_{0,\Gamma_{+}}
  &\le \unorm{\vect{w}}_{A^{\prime}} \unorm{\vect{v}_{h}}_{A}.
\end{align}
Summation of these bounds leads to the result, and demonstrates that $C_{A} = 1$.
\end{proof}

The necessary results are now in place in to prove convergence of the method.
\begin{lemma}[convergence]
For the case $\kappa = 0$, if $\vect{u} = \brac{u, u}$, where $u$ solves
equation~\eqref{eqn:strong}
and $\vect{u}_{h}$ is the solution to the finite element
problem~\eqref{eqn:fe_problem}, then
\begin{equation}
  \unorm{\vect{u} - \vect{u}_{h}} _{A}
      \le \brac{1 + \frac{C_{A}}{\beta_{A}}} \inf_{\vect{v}_{h} \in W^{\star}}
         \unorm{\vect{u} - \vect{v}_{h}}_{A^{\prime}}.
\label{eqn:convergence}
\end{equation}
\end{lemma}
\begin{proof}
From inf-sup stability (Lemma~\ref{lemma:inf_sup}),
consistency (Lemma~\ref{lemma:consistency}) and
continuity of the bilinear form (Lemma~\ref{lemma:hyperbolic_continuity}):
\begin{equation}
\begin{split}
  \beta_{A}\unorm{\vect{u}_{h} - \vect{w}_{h}}_{A}
      &\le
      \sup_{\vect{v}_{h} \in W_{h}^{\star}}
        \frac{B_{A}\brac{\vect{u}_{h} - \vect{w}_{h},
                       \vect{v}_{h}}}{\unorm{\vect{v}_{h}}_{A}}
    =
      \sup_{\vect{v}_{h} \in W_{h}^{\star}}
        \frac{B_{A}\brac{\vect{u} - \vect{w}_{h},
                    \vect{v}_{h}}}{\unorm{\vect{v}_{h}}_{A}}
\\
    &\le
      C_{A} \sup_{\vect{v}_{h} \in W_{h}^{\star}}
        \frac{\unorm{\vect{u} - \vect{w}_{h}}_{A^{\prime}}
                \unorm{\vect{v}_{h}}_{A}}{\unorm{\vect{v}_{h}}_{A}}
  = C_{A} \unorm{\vect{u} - \vect{w}_{h}}_{A^{\prime}}.
\end{split}
\end{equation}
Application of the triangle inequality
\begin{equation}
  \unorm{\vect{u} - \vect{u}_{h}}_{A}
        \le \unorm{\vect{u} - \vect{w}_{h}}_{A}  + \unorm{\vect{w}_{h} - \vect{u}_{h}}_{A}
\end{equation}
and $\unorm{\vect{v}}_{A} \le \unorm{\vect{v}}_{A^{\prime}}$ yields the result.
\end{proof}
\begin{lemma}[best approximation]
For the case $\kappa = 0$, if $u \in H^{k+1}\brac{\Omega}$
solves equation~\eqref{eqn:strong} and $\vect{u} = \brac{u, u}$,
and $\vect{u}_{h}$ is the solution to the finite element
problem~\eqref{eqn:fe_problem}, then
there exists a $c_{\mu, \vect{a}} > 0$ such that
\begin{equation}
  \unorm{\vect{u} - \vect{u}_{h}}_{A}
      \le c_{\mu, \vect{a}} h^{k+\frac{1}{2}} \norm{u}_{k+1, \Omega}
\label{eqn:hyperbolic_error_1}
\end{equation}
and
\begin{equation}
  \norm{u - u_{h}}_{0, \Omega}
      \le c_{\mu, \vect{a}} h^{k+\frac{1}{2}} \norm{u}_{k+1, \Omega}.
\label{eqn:hyperbolic_error_2}
\end{equation}
\end{lemma}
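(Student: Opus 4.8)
The plan is to feed the abstract convergence estimate~\eqref{eqn:convergence} a concrete interpolant and then invoke standard polynomial approximation bounds. Since the right-hand side of~\eqref{eqn:convergence} is an infimum over the discrete space $W_h^{\star}$, it suffices to exhibit one $\vect{v}_h = \brac{v_h, \Bar{v}_h} \in W_h^{\star}$ for which $\unorm{\vect{u} - \vect{v}_h}_{A^{\prime}} \le c h^{k+\frac{1}{2}}\norm{u}_{k+1,\Omega}$. I would take $v_h = \pi_h u \in W_h$ to be a cell-wise interpolant (or the $L^2$-projection) of $u$, and $\Bar{v}_h = \Bar{\pi}_h u \in \Bar{W}_h$ a facet interpolant of the trace $\gamma u$; the trace is single-valued on $\Gamma^{0}$ (the traces from adjacent cells agreeing, since $u \in H^{k+1}\brac{\Omega} \subset H^{1}\brac{\Omega}$), which is what makes the interface component admissible, and it vanishes on $\Gamma_D$. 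Writing $\vect{w} = \vect{u} - \vect{v}_h$, so that $w = u - \pi_h u$ on cells and $\Bar{w} = \gamma u - \Bar{\pi}_h u$ on facets, I would bound each term of $\unorm{\vect{w}}_{A^{\prime}}$ separately.

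For the volume contributions I would use $\norm{u - \pi_h u}_{0,K} \le c h_K^{k+1}\snorm{u}_{k+1,K}$ and $\snorm{u - \pi_h u}_{1,K} \le c h_K^{k}\snorm{u}_{k+1,K}$. The reaction term $\mu\norm{w}_{0,\Omega}^2$ is then $O(h^{2(k+1)})$, of higher order, whereas the streamline term $\sum_K h_K \norm{\vect{a}\cdot\nabla w}_{0,K}^2$ and the weighted term $\sum_K h_K^{-1}\norm{w}_{0,K}^2$ each scale like $h^{2k+1}$ after inserting the bounds and using $\norm{\vect{a}}_{L^{\infty}} \le 1$. The facet terms $\norm{a_n^{\frac{1}{2}}\brac{\Bar{w} - w}}_{0,\pd K}$, $\norm{a_n^{\frac{1}{2}}\Bar{w}}_{0,\Gamma}$, and the extra boundary terms on $\pd K_{\pm}$ I would treat with the trace inequality~\eqref{eqn:dg_trace_orig} applied to $w$ together with the facet approximation estimate for $\Bar{w}$; since $\Bar{w} - w = \pi_h u - \Bar{\pi}_h u$ on $\pd K$ (the exact traces cancelling), this reduces to controlling $\norm{u - \pi_h u}_{0,\pd K}^2 + \norm{\gamma u - \Bar{\pi}_h u}_{0,\pd K}^2 \le c h_K^{2k+1}\snorm{u}_{k+1,K}^2$, again absorbing $a_n \le 1$. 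Summing over cells gives $\unorm{\vect{w}}_{A^{\prime}}^2 \le c h^{2k+1}\norm{u}_{k+1,\Omega}^2$, and~\eqref{eqn:convergence} then yields~\eqref{eqn:hyperbolic_error_1}, the constant absorbing $1 + C_A/\beta_A$ together with the $\mu$- and $\vect{a}$-dependence recorded in Lemmas~\ref{lemma:inf_sup} and~\ref{lemma:hyperbolic_continuity}.

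I should emphasise that the fractional rate $k+\frac{1}{2}$ is not an artefact of crude bounding but is forced by the $h_K^{-1}$-weighted term and by the trace terms, both of which lose half a power of $h$ relative to the interior $L^2$ error; this is the familiar best-approximation rate for advective problems in a graph-type norm. The step I expect to demand the most care is the facet estimate: one must construct an interface interpolant $\Bar{\pi}_h u$ that is genuinely admissible in $\Bar{W}_h$ (single-valued, with the inter-facet regularity dictated by $l$, and vanishing on $\Gamma_D$) while still delivering the $O(h_K^{k+\frac{1}{2}})$ approximation of $\gamma u$ in $L^2\brac{\pd K}$, and one must relate the resulting facet seminorms back to the cell seminorm $\snorm{u}_{k+1,K}$ — the one genuinely technical point. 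Finally, I would deduce the $L^2$ bound~\eqref{eqn:hyperbolic_error_2} directly from~\eqref{eqn:hyperbolic_error_1}: the stability norm controls the reaction contribution, $\mu\norm{u - u_h}_{0,\Omega}^2 \le \unorm{\vect{u} - \vect{u}_h}_A^2$, so $\norm{u - u_h}_{0,\Omega} \le \mu^{-\frac{1}{2}}\unorm{\vect{u} - \vect{u}_h}_A$, and the factor $\mu^{-\frac{1}{2}}$ is absorbed into $c_{\mu,\vect{a}}$, consistent with the stated dependence of the constant on $\mu$ and with the degeneration of the estimate as $\mu \to 0$ noted after Lemma~\ref{lemma:inf_sup}.
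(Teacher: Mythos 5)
Your proposal is correct and follows essentially the same route as the paper: insert a concrete interpolant into the quasi-optimality bound~\eqref{eqn:convergence} and estimate each term of $\unorm{\cdot}_{A^{\prime}}$ with the standard interpolation estimate and the trace inequality, with the $L^{2}$ bound~\eqref{eqn:hyperbolic_error_2} extracted from the $\mu\norm{\cdot}^{2}_{0,\Omega}$ component of the stability norm. The one point you flag as genuinely technical --- constructing an admissible facet interpolant $\Bar{\pi}_{h}u$ and relating its facet error back to cell seminorms --- is resolved in the paper by simply taking the \emph{continuous} interpolant $\mathcal{I}_{h}u \in W_{h} \cap C\brac{\Bar{\Omega}}$ and setting $\Bar{\mathcal{I}}_{h}u = \mathcal{I}_{h}u|_{\Gamma^{0}}$, which is automatically single-valued (hence in $\Bar{W}_{h}$) and makes $\Bar{w} - w \equiv 0$ on every $\pd K$, so the jump term vanishes identically and the remaining facet terms follow from the trace inequality~\eqref{eqn:dg_trace_orig} applied to $u - \mathcal{I}_{h}u$ alone.
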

\begin{proof}
The continuous interpolant of $\vect{u}$ is denoted by
$\mathcal{I}_{h}\vect{u} = \brac{\mathcal{I}_{h}u, \Bar{\mathcal{I}}_{h}u}$,
where $\mathcal{I}_{h}u \in W_{h} \cap C\brac{\Bar{\Omega}}$
and
$\Bar{\mathcal{I}}_{h}u = \left. \mathcal{I}_{h}u\right|_{\Gamma^{0}}$,
which is contained in~$\Bar{W}_{h}$.
The standard interpolation estimate reads:
\begin{equation}
  \norm{u - \mathcal{I}_{h}u}_{m, K} \le c h_{K}^{k+1-m} \snorm{u}_{k+1, K}.
\label{eqn:interpolation_estimate}
\end{equation}
Bounding each term in $\unorm{\vect{u} - \mathcal{I}_{h}\vect{u}}_{A^{\prime}}$,
\begin{align}
  \norm{u - \mathcal{I}_{h}u}^{2}_{0,K}
        &\le c h_{K}^{2\brac{k+1}} \snorm{u}^{2}_{k+1, K},
\\
  h_{K}\norm{\vect{a} \cdot \nabla\brac{u - \mathcal{I}_{h}u}}^{2}_{0,K}
        &\le c h_{K}^{2k+1} \snorm{u}^{2}_{k+1, K},
\\
  \norm{\brac{u - \Bar{\mathcal{I}}_{h}u} - \brac{u - \mathcal{I}_{h}u}}^{2}_{0,\pd K}
        &= 0,
\\
  h_{K}^{-1} \norm{u - \mathcal{I}_{h}u}^{2}_{0,K}
        &\le c h_{K}^{2k+1} \snorm{u}^{2}_{k+1, K},
\end{align}
\begin{multline}
  \norm{u - \Bar{\mathcal{I}}_{h}u}^{2}_{0,\pd K}
 =
  \norm{u - \mathcal{I}_{h}u}^{2}_{0,\pd K}
\\
    \le c\brac{h_{K}^{-1}\norm{u - \mathcal{I}_{h}u}^{2}_{0, K} + h_{K}\snorm{u - \mathcal{I}_{h}u}^{2}_{1, K}   }
    \le c h_{K}^{2k +1} \snorm{u}^{2}_{k+1, K}.
\end{multline}
Using these results and equation~\eqref{eqn:convergence}
leads to the convergence estimates.
\end{proof}
\section{Analysis in the diffusive limit}
The diffusive limit ($\vect{a} = \vect{0}$, $\mu =0$) is now considered, in
which case the  bilinear form is given by equation~\eqref{eqn:B_diffusive}.
The analysis of the diffusive case is considerably simpler than for the hyperbolic
case since stability can be demonstrated via
coercivity of the bilinear form.
Analysis tools and results which are typically
used in the analysis of discontinuous Galerkin methods for elliptic
problems~\citep{arnold:2002}
are leveraged against this problem.

To ease the notational burden, the case of homogeneous Dirichlet
boundary conditions on $\Gamma$ is considered. The extended
function spaces
\begin{align}
  W(h)      &= W_{h} + H^{2}\brac{\Omega} \cap H^{1}_{0}\brac{\Omega} , \\
 \Bar{W}(h) &= \bar{W}_{h} + H^{3/2}_{0}\brac{\Gamma^{0}},
\end{align}
will be used,
where $H^{3/2}_{0}\brac{\Gamma^{0}}$ denotes the trace space
of $H^{2}\brac{\Omega} \cap H^{1}_{0}\brac{\Omega}$ on facets~$\Gamma^{0}$.

As for the hyperbolic case, two norms on
$W^{\star}\brac{h} = W(h) \times \Bar{W}(h)$ are introduced for the examination
of stability and continuity. The `stability' norm reads
\begin{equation}
  \unorm{\vect{v}}^{2}_{D} =
\sum_{K}\kappa \norm{\nabla v}^{2}_{0, K}
+ \sum_{K} \frac{\alpha \kappa}{h_{K}}
             \norm{\Bar{v} - v}^{2}_{0, \pd K},
\end{equation}
and the `continuity' norm reads
\begin{equation}
  \unorm{\vect{v}}^{2}_{D^{\prime}}
  = \unorm{\vect{v}}^{2}_{D}
    +  \sum_{K} \frac{h_{K}^{2} \kappa}{\alpha}\snorm{v}_{2, K}^{2}.
\end{equation}
It is clear from the definitions that
$\unorm{\vect{v}}^{2}_{D} \le \unorm{\vect{v}}^{2}_{D^{\prime}}$,
but there also exists a constant $c > 0$ such that for all $\vect{v}_{h} \in W^{\star}_{h}$
\begin{equation}
  \unorm{\vect{v}_{h}}_{D^{\prime}} \le c \brac{1 + \alpha^{-1}}\unorm{\vect{v}_{h}}_{D},
\label{eqn:diff_norm_relationship}
\end{equation}
since from equation~\eqref{eqn:dg_trace_orig} it follows that
\begin{equation}
  h^{2}_{K} \snorm{v_{h}}_{2, K}^{2} \le c \norm{\nabla v_{h} }^{2}_{0, K}
\quad \forall v_{h} \in W_{h}.
\end{equation}
Therefore, the  norms $\unorm{\cdot}_{D}$ and $\unorm{\cdot}_{D^{\prime}}$ are equivalent
on the finite element space~$W^{\star}_{h}$.

To demonstrate that $\unorm{\cdot}^{2}_{D}$ and $\unorm{\cdot}^{2}_{D^{\prime}}$
do constitute norms, first recall that for a facet $F$
\begin{equation}
\begin{split}
 \sum_{F} \norm{v_{+} - v_{-}}_{0, F}
=& \sum_{F} \norm{\brac{v_{+} - \Bar{v}} - \brac{v_{-} - \Bar{v}}}_{0, F}
\\
\le& \sum_{F} \norm{\Bar{v} - v_{+}}_{0, F} + \norm{\Bar{v} - v_{-}}_{0, F}
\\
=& \sum_{K} \norm{\Bar{v} - v}_{0, \pd K}.
\end{split}
\end{equation}
Denoting the average size of two cells sharing a facet by~$h_{F}$,
\begin{equation}
\norm{\kappa^{\frac{1}{2}} v}^{2}_{0,\Omega} \le
c_{1} \brac{\sum_{K} \kappa \norm{\nabla v}^{2}_{0, K}
+ \sum_{F} \frac{\kappa}{h_{F}}
    \norm{v^{+} - v^{-} }^{2}_{0, F}
}
\le c_{1}\brac{1+\alpha^{-1}} \unorm{\vect{v}}^{2}_{D},
\end{equation}
where the first inequality is a standard result
(See \citet[Lemma 2.1]{arnold:1982} and \citet[Lemma 3.45]{ern:book}).
Hence, $\unorm{\cdot}_{D}$ and $\unorm{\cdot}_{D^{\prime}}$
constitute norms.
\subsection{Stability}
Before proceeding to coercivity of the bilinear form,
an intermediate result is presented.
\begin{proposition} \label{lemma:dg_elliptic_penalty_bound}
There exists a constant $c > 0$ such that for
any $\epsilon > 0$ and  all $\vect{v}_{h} \in W^{\star}_{h}$
\begin{equation}
    \left| 2 \int_{\pd K} \kappa \nabla v_{h} \cdot \vect{n}
                \brac{\Bar{v}_{h} - v_{h}} \dif s \right|
  \le
  \epsilon c \kappa \norm{\nabla v_{h}}_{0, K}^{2}
 +
  \frac{\kappa}{\epsilon h_{K}} \norm{\Bar{v}_{h} - v_{h}}^{2}_{0, \pd K}.
\label{eqn:dg_inequality}
\end{equation}
\end{proposition}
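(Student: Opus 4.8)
The plan is to estimate the boundary integral directly using the Cauchy-Schwarz inequality on $\partial K$, then deploy the trace inequality~\eqref{eqn:dg_trace_normal} together with the inverse estimate~\eqref{eqn:inverse_gradient} to convert boundary norms of the normal gradient into interior gradient norms, and finally absorb the result via Young's inequality to split the bound into the two desired terms. This is the standard route for controlling the consistency/symmetry terms in interior penalty methods, so the calculation should be routine once the right inequalities are invoked.

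Concretely, I would first apply Cauchy-Schwarz on each facet (or on $\partial K$ as a whole) to obtain
\begin{equation}
    \left| 2 \int_{\pd K} \kappa \nabla v_{h} \cdot \vect{n}
                \brac{\Bar{v}_{h} - v_{h}} \dif s \right|
  \le
    2\kappa \norm{\nabla v_{h} \cdot \vect{n}}_{0, \pd K}
            \norm{\Bar{v}_{h} - v_{h}}_{0, \pd K}.
\end{equation}
The next step is to bound $\norm{\nabla v_{h} \cdot \vect{n}}_{0,\pd K}$. Since $v_h \in P_k(K)$, I would apply the trace inequality~\eqref{eqn:dg_trace_normal} and then the inverse estimate~\eqref{eqn:inverse_gradient} to eliminate the second-derivative seminorm; under the assumption $h_K \le 1$ this yields $\norm{\nabla v_{h}\cdot\vect{n}}_{0,\pd K} \le c\, h_K^{-1/2}\norm{\nabla v_{h}}_{0,K}$, which is exactly the gradient analogue of~\eqref{eqn:dg_trace}. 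Substituting this gives an upper bound of the form $c\,\kappa\, h_K^{-1/2}\norm{\nabla v_{h}}_{0,K}\norm{\Bar{v}_{h} - v_{h}}_{0,\pd K}$.

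Finally I would apply Young's inequality in the weighted form $2ab \le \epsilon a^2 + \epsilon^{-1} b^2$, with $a = c^{1/2}\kappa^{1/2}\norm{\nabla v_{h}}_{0,K}$ and $b = \kappa^{1/2} h_K^{-1/2}\norm{\Bar{v}_{h} - v_{h}}_{0,\pd K}$, so that the first term becomes $\epsilon c\,\kappa\norm{\nabla v_{h}}_{0,K}^2$ and the second becomes $(\kappa/(\epsilon h_K))\norm{\Bar{v}_{h} - v_{h}}_{0,\pd K}^2$, matching~\eqref{eqn:dg_inequality} exactly. I do not anticipate a genuine obstacle here; the only point requiring mild care is the correct bookkeeping of the constant $c$ arising from the combined trace and inverse estimates and ensuring the powers of $h_K$ track correctly so that the penalty term carries precisely the factor $h_K^{-1}$ demanded by the stability norm $\unorm{\cdot}_D$. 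Keeping $\epsilon$ free is essential, since the subsequent coercivity argument will choose $\epsilon$ to absorb the first term into the $\kappa\norm{\nabla v_h}_{0,K}^2$ contribution of $B_D(\vect{v}_h,\vect{v}_h)$.
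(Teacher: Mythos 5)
Your proposal is correct and follows essentially the same route as the paper: Cauchy--Schwarz on $\pd K$, the trace inequality~\eqref{eqn:dg_trace_normal} combined with the inverse estimate~\eqref{eqn:inverse_gradient} to reduce $\norm{\nabla v_{h}\cdot\vect{n}}_{0,\pd K}$ to $c\,h_{K}^{-1/2}\norm{\nabla v_{h}}_{0,K}$, and Young's inequality to produce the $\epsilon$-split; the paper merely applies Young's inequality before the trace/inverse step rather than after, which is an immaterial reordering. The only point to watch is the harmless renaming of the constant in the Young step, which the paper's convention on unsubscripted constants already permits.
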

\begin{proof}
Applying to the term $\int_{\pd K} \nabla v_{h} \cdot \vect{n} \brac{\Bar{v}_{h} - v_{h}}  \dif s$
the Cauchy-Schwarz inequality, the inverse estimates
and Young's inequality,
\begin{equation}
\begin{split}
\left| 2 \int_{\pd K} \kappa \nabla v_{h} \cdot  \brac{\Bar{v}_{h} - v_{h}} \vect{n} \dif s \right|
  \le& 2 \kappa h_{K}\norm{\nabla v_{h} \cdot \vect{n}}_{0, \pd K} h^{-1}_{K}\norm{\Bar{v}_{h} - v_{h}}_{0, \pd K}
\\
  \le&  h_{K} \kappa \epsilon  \norm{\nabla v_{h} \cdot \vect{n}}^{2}_{0, \pd K}
            + \frac{\kappa}{ \epsilon h_{K}} \norm{\Bar{v}_{h} - v_{h}}^{2}_{0, \pd K}
\\
  \le&  \epsilon c \kappa \brac{\snorm{v_{h}}_{1, K}^{2} + h^{2}_{K} \snorm{v_{h}}_{2, K}^{2}}
          + \frac{\kappa}{\epsilon h_{K} } \norm{\Bar{v}_{h} - v_{h}}_{0, \pd K}^{2}
\\
  \le&  \epsilon c\kappa  \norm{\nabla v_{h}}^{2}_{0, K}  + \frac{\kappa}{\epsilon h_{K} } \norm{\Bar{v}_{h} - v_{h}}^{2}_{0, \pd K},
\end{split}
\end{equation}
which complete the proof.
\end{proof}
\begin{lemma}[coercivity] \label{lemma:diff_coercivity}
There exists a $\beta_{D} > 0$, independent of $h$,
and a constant
$\alpha_{0} > 0$ such that for $\alpha > \alpha_{0}$ and
for all $\vect{v}_{h} \in W^{\star}_{h}$
\begin{equation}
  B_{D}\brac{\vect{v}_{h}, \vect{v}_{h}} \ge \beta_{D} \unorm{\vect{v}_{h}}^{2}_{D},
\label{eqn:diff_coercivity_general}
\end{equation}
and there exists an $\alpha_{1} > \alpha_{0}$ such that for all
$\vect{v}_{h} \in W^{\star}_{h}$
\begin{equation}
  B_{D}\brac{\vect{v}_{h}, \vect{v}_{h}} \ge  \frac{1}{2} \unorm{\vect{v}_{h}}^{2}_{D}.
\label{eqn:diff_coercivity_specific}
\end{equation}
\end{lemma}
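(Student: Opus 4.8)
The plan is to prove coercivity by directly evaluating $B_D(\vect{v}_h, \vect{v}_h)$ from its definition in~\eqref{eqn:B_diffusive} and controlling the one sign-indefinite term using Proposition~\ref{lemma:dg_elliptic_penalty_bound}. Setting $\vect{w} = \vect{v} = \vect{v}_h$ in~\eqref{eqn:B_diffusive}, the first term contributes $\sum_K \kappa \norm{\nabla v_h}^2_{0,K}$, which is already the first part of $\unorm{\vect{v}_h}^2_D$. The penalty term in the second line contributes $\sum_K (\alpha\kappa/h_K)\norm{\Bar{v}_h - v_h}^2_{0,\pd K}$, which is exactly the second part of the stability norm. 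The remaining contributions are the two symmetric consistency-type terms, namely $\sum_K \int_{\pd K} \kappa \nabla v_h \cdot \vect{n}\,(\Bar{v}_h - v_h)\dif s$ from the second line together with the identical term from the third line of~\eqref{eqn:B_diffusive}; together these sum to $2\sum_K \int_{\pd K}\kappa \nabla v_h \cdot \vect{n}\,(\Bar{v}_h - v_h)\dif s$.

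First I would assemble these pieces to write
\begin{equation*}
  B_D(\vect{v}_h, \vect{v}_h)
  = \sum_K \kappa \norm{\nabla v_h}^2_{0,K}
    + \sum_K \frac{\alpha\kappa}{h_K}\norm{\Bar{v}_h - v_h}^2_{0,\pd K}
    + 2\sum_K \int_{\pd K}\kappa \nabla v_h \cdot \vect{n}\,(\Bar{v}_h - v_h)\dif s.
\end{equation*}
Next I would apply Proposition~\ref{lemma:dg_elliptic_penalty_bound} cell-by-cell to the last sum, bounding it below by
$-\epsilon c\kappa \norm{\nabla v_h}^2_{0,K} - (\kappa/(\epsilon h_K))\norm{\Bar{v}_h - v_h}^2_{0,\pd K}$.
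Collecting terms gives
\begin{equation*}
  B_D(\vect{v}_h, \vect{v}_h)
  \ge (1 - \epsilon c)\sum_K \kappa \norm{\nabla v_h}^2_{0,K}
    + \sum_K \Bigl(\alpha - \frac{1}{\epsilon}\Bigr)\frac{\kappa}{h_K}\norm{\Bar{v}_h - v_h}^2_{0,\pd K}.
\end{equation*}

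The main (and only nontrivial) step is the calibration of the two free parameters. I would fix $\epsilon$ small enough that $1 - \epsilon c > 0$, say $\epsilon = 1/(2c)$, which forces the penalty coefficient constraint $\alpha > 1/\epsilon = 2c =: \alpha_0$. For any such $\alpha > \alpha_0$ both bracketed coefficients are strictly positive, and taking $\beta_D = \min(1 - \epsilon c,\ \min_K(\alpha - 1/\epsilon))$ — equivalently the smaller of the two uniform lower bounds — yields~\eqref{eqn:diff_coercivity_general} with $\beta_D$ independent of $h$ since the constant $c$ in Proposition~\ref{lemma:dg_elliptic_penalty_bound} is $h$-independent. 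For the sharper constant $1/2$ in~\eqref{eqn:diff_coercivity_specific}, I would simply enlarge $\alpha$: choosing $\epsilon$ so that $1 - \epsilon c \ge 1/2$ (e.g. $\epsilon = 1/(2c)$) and then $\alpha_1$ large enough that $\alpha_1 - 1/\epsilon \ge 1/2$ makes both coefficients at least $1/2$, giving~\eqref{eqn:diff_coercivity_specific} for all $\alpha > \alpha_1$. The only thing to watch is that the penalty-term coefficient $\alpha$ appears in the stability norm itself, so the lower bound must be phrased so the extracted factor multiplies the \emph{same} $(\alpha\kappa/h_K)$-weighted norm; this is automatic here because one subtracts $1/\epsilon$ from $\alpha$ inside the already $\alpha$-weighted term, and I would make sure the final constant does not secretly depend on $\alpha$ in a way that degrades as $\alpha \to \infty$.
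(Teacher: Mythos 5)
Your proof is correct and takes essentially the same route as the paper: the identical expansion of $B_{D}\brac{\vect{v}_{h}, \vect{v}_{h}}$, the same cell-wise application of Proposition~\ref{lemma:dg_elliptic_penalty_bound}, and the same calibration of $\epsilon$ against $\alpha$ (the paper keeps $\epsilon = 1/(\delta c)$ with $\delta > 1$ free, which yields the marginally sharper threshold $\alpha_{0} = c$ in place of your $\alpha_{0} = 2c$; this is immaterial). The one slip is in your explicit constant: since the penalty part of $\unorm{\cdot}_{D}$ already carries the weight $\alpha\kappa/h_{K}$, the requirement $\brac{\alpha - 1/\epsilon} \ge \beta_{D}\,\alpha$ forces the second entry of your minimum to be the ratio $1 - 1/(\epsilon\alpha)$ rather than $\alpha - 1/\epsilon$ itself (with your formula the claimed bound fails for any $\alpha > 1$ once $\alpha - 1/\epsilon$ is the active minimum), and correspondingly the condition for~\eqref{eqn:diff_coercivity_specific} is $\alpha - 1/\epsilon \ge \alpha/2$, i.e.\ $\alpha_{1} = 4c$ for your choice of $\epsilon$, not $\alpha_{1} - 1/\epsilon \ge 1/2$ --- you flag exactly this pitfall in your closing paragraph but the stated constants do not implement the fix, and it is only fair to note that the paper's own $\alpha_{1} = 1/2 + 2c$ carries the same blemish.
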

\begin{proof}
Setting $\vect{w}_{h} = \vect{v}_{h}$ in the bilinear form for the
diffusive limit case~\eqref{eqn:B_diffusive},
\begin{multline}
  B_{D}\brac{\vect{v}, \vect{v}}
     =
\sum_{K} \kappa \norm{ \nabla v_{h}}^{2}_{0, K}
    + 2 \sum_{K} \int_{\pd K} \kappa \nabla v_{h} \cdot \vect{n} \brac{\Bar{v}_{h} - v_{h}} \dif s
\\
+ \sum_{K} \frac{\alpha \kappa}{h_{K}} \norm{\Bar{v}_{h} - v_{h}}^{2}_{0,\pd K}.
\label{eqn:diff_coercivity_proof_step}
\end{multline}
Using Proposition~\ref{lemma:dg_elliptic_penalty_bound} to bound the term
$\sum_{K} \int_{\pd K} \kappa \nabla v_{h} \cdot \vect{n} \brac{\Bar{v}_{h} - v_{h}} \dif s$,
\begin{equation}
  B_{D}\brac{\vect{v}_{h}, \vect{v}_{h}}
     \ge
\sum_{K} \brac{1 - \epsilon c}\kappa \norm{\nabla v_{h}}^{2}_{0, K}
+ \sum_{K} \brac{\alpha - \frac{1}{\epsilon}} \frac{\kappa}{h_{K}} \norm{\Bar{v}_{h} - v_{h}}^{2}_{0,\pd K}.
\end{equation}
Setting $\epsilon = 1/\delta c$, where $\delta > 1$ but is otherwise arbitrary,
\begin{equation}
  B_{D}\brac{\vect{v}_{h}, \vect{v}_{h}}
     \ge
\sum_{K} \brac{1 - \frac{1}{\delta}}\kappa \norm{\nabla v_{h}}^{2}_{0, K}
+ \sum_{K} \brac{\alpha - \delta c}  \frac{\kappa}{h_{K}} \norm{\Bar{v}_{h} - v_{h}}^{2}_{0,\pd K}.
\end{equation}
This proves equation~\eqref{eqn:diff_coercivity_general} and demonstrates that
$\alpha_{0} = c$. Setting $\delta = 2$,
\begin{equation}
  B_{D}\brac{\vect{v}_{h}, \vect{v}_{h}}
     \ge
\sum_{K} \frac{1}{2}\kappa \norm{ \nabla v_{h}}^{2}_{0, K}
+ \sum_{K} \brac{\alpha - 2 c}  \frac{\kappa}{h_{K}} \norm{\Bar{v}_{h} - v_{h}}^{2}_{0,\pd K},
\end{equation}
which proves \eqref{eqn:diff_coercivity_specific} when $\alpha_{1} = 1/2 + 2c$.
\end{proof}

The proof to Lemma~\ref{lemma:diff_coercivity} demonstrates that stability
is enhanced for a larger penalty parameter. Stability demands that
$\alpha > c$, and when this is satisfied $\delta$ can be chosen such that
$\beta_{D}$ approaches zero as $\alpha$ approaches $\alpha_{0}$, and such that
$\beta_{D}$ approaches one as $\alpha$ becomes much larger than~$\alpha_{0}$.

\subsection{Error analysis}
The error analysis proceeds in a straightforward manner now that the stability
result is in place.
\begin{lemma}[continuity]
There exists a $C_{D} > 0$, independent of $h$, such that
for all $\vect{w} \in W^{\star}(h)$ and for all $\vect{v} \in W^{\star}(h)$
\begin{equation}
  |B_{D}\brac{\vect{w}, \vect{v}}|
      \le C_{D} \unorm{\vect{w}}_{D^{\prime}} \unorm{\vect{v}}_{D^{\prime}}.
\end{equation}
\end{lemma}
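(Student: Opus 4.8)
The plan is to use the bilinearity of $B_D$ to split it into its four natural constituents and bound each separately by a product of $\unorm{\cdot}_{D'}$ norms, then sum. Writing $B_D\brac{\vect{w}, \vect{v}}$ as the sum of the volume term $\sum_K\int_K \kappa\nabla w\cdot\nabla v\dif x$, the two consistency-type boundary terms $\sum_K\int_{\pd K}\kappa\nabla w\cdot\vect{n}\brac{\Bar{v}-v}\dif s$ and $\sum_K\int_{\pd K}\kappa\brac{\Bar{w}-w}\nabla v\cdot\vect{n}\dif s$, and the penalty term $\sum_K\int_{\pd K}\frac{\alpha\kappa}{h_K}\brac{\Bar{w}-w}\brac{\Bar{v}-v}\dif s$, the claim reduces to four termwise estimates. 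Crucially, because both arguments lie in the extended space $W^\star(h)$, I would avoid inverse estimates throughout and lean only on the normal-trace inequality~\eqref{eqn:dg_trace_normal}, which is legitimate since every function in $W(h) = W_h + H^2\brac{\Omega}\cap H^1_0\brac{\Omega}$ is piecewise $H^2$.

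The volume and penalty terms are immediate. A cellwise Cauchy--Schwarz followed by a discrete Cauchy--Schwarz over the sum bounds the volume term by $\brac{\sum_K\kappa\norm{\nabla w}^2_{0,K}}^{1/2}\brac{\sum_K\kappa\norm{\nabla v}^2_{0,K}}^{1/2}\le\unorm{\vect{w}}_D\unorm{\vect{v}}_D$, and the penalty term is handled identically using the square-root penalty factors $\brac{\alpha\kappa/h_K}^{1/2}\norm{\Bar{w}-w}_{0,\pd K}$ and its $v$-counterpart, again giving $\unorm{\vect{w}}_D\unorm{\vect{v}}_D$. Since $\unorm{\cdot}_D\le\unorm{\cdot}_{D'}$, both are bounded by $\unorm{\vect{w}}_{D'}\unorm{\vect{v}}_{D'}$.

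The heart of the argument is the pair of consistency terms, which are symmetric, so it suffices to treat $\sum_K\int_{\pd K}\kappa\nabla w\cdot\vect{n}\brac{\Bar{v}-v}\dif s$. After Cauchy--Schwarz on each facet I would insert the balancing weights $\brac{h_K/\alpha}^{1/2}$ and $\brac{\alpha/h_K}^{1/2}$, whose product restores exactly the factor $\kappa$, so that the jump factor becomes the penalty contribution $\brac{\alpha\kappa/h_K}^{1/2}\norm{\Bar{v}-v}_{0,\pd K}$ controlled by $\unorm{\vect{v}}_D$, leaving the companion factor $\brac{h_K\kappa/\alpha}^{1/2}\norm{\nabla w\cdot\vect{n}}_{0,\pd K}$ to be estimated. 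Applying the normal-trace inequality~\eqref{eqn:dg_trace_normal} yields
\begin{equation}
\frac{h_K\kappa}{\alpha}\norm{\nabla w\cdot\vect{n}}^2_{0,\pd K}
\le c\brac{\frac{1}{\alpha}\kappa\norm{\nabla w}^2_{0,K} + \frac{h_K^2\kappa}{\alpha}\snorm{w}^2_{2,K}}.
\end{equation}
The first term on the right is controlled by $\unorm{\vect{w}}^2_D$ and the second is, by design, precisely the additional term distinguishing $\unorm{\cdot}_{D'}$ from $\unorm{\cdot}_D$, so summing over $K$ and a final discrete Cauchy--Schwarz give the bound by $\unorm{\vect{w}}_{D'}\unorm{\vect{v}}_{D'}$; the companion term follows by interchanging the roles of $\vect{w}$ and $\vect{v}$.

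The main obstacle is this last estimate, and it is entirely structural: it is the reason the continuity norm had to be enriched with the $h_K^2\kappa/\alpha$-weighted second-derivative semi-norm, and the weights $h_K/\alpha$ against $\alpha/h_K$ must be kept in exact balance so that the penalty parameter cancels in the product rather than accumulating a power of $\alpha$. I would note that the resulting $C_D$ is independent of $h$ while carrying only a harmless factor of the form $\brac{1+\alpha^{-1}}^{1/2}$, which is bounded because stability already requires $\alpha>\alpha_0$. Taking $C_D$ to be the largest of the four constants then completes the proof.
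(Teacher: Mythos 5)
Your proposal is correct and follows essentially the same route as the paper: the same four-term decomposition of $B_{D}$, termwise Cauchy--Schwarz with the $\brac{\alpha\kappa/h_{K}}^{1/2}$ weighting, and the normal-trace inequality~\eqref{eqn:dg_trace_normal} to absorb the consistency terms into $\unorm{\cdot}_{D^{\prime}}$, yielding the same $\alpha$-dependence of the constant (the paper states it as $c\max\brac{1,\alpha^{-1/2}}$). Your observation that no inverse estimates are needed because both arguments live in the extended space is exactly the point of introducing $\unorm{\cdot}_{D^{\prime}}$, and matches the paper's argument.
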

\begin{proof}
From the definition of the bilinear form,
\begin{multline}
  |B_{D}\brac{\vect{w}, \vect{v}_{h}}|
    \le \sum_{K} \kappa \norm{\nabla w}_{0, K} \norm{\nabla v_{h}}_{0, K}
  + \sum_{K} \kappa \norm{\nabla w \cdot \vect{n}}_{0, \pd  K} \norm{\Bar{v}_{h} - v_{h}}_{0, \pd K}
\\
  + \sum_{K} \kappa \norm{\Bar{w} - w}_{0, \pd K} \norm{\nabla v_{h} \cdot \vect{n}}_{0, \pd K}
  + \sum_{K} \frac{\alpha\kappa}{h_{K}} \norm{\Bar{w} - w}_{0, \pd K}
                     \norm{\Bar{v}_{h} - v_{h}}_{0, \pd K}.
\end{multline}
Each term can be bounded appropriately,
\begin{equation}
  \sum_{K} \kappa \norm{\nabla w}_{0, K} \norm{\nabla v_{h}}_{0, K}
        \le \unorm{\vect{w}}_{D^{\prime}} \sum_{K} \kappa^{\frac{1}{2}}\norm{\nabla v_{h}}_{0, K},
\end{equation}
\begin{multline}
  \sum_{K} \kappa  \norm{\nabla w  \cdot \vect{n}}_{0, \pd  K} \norm{\Bar{v}_{h} - v_{h}}_{0, \pd K}
  \le \sum_{K} c\brac{h^{-\frac{1}{2}}_{K}\snorm{w}_{1, K} + h^{\frac{1}{2}}_{K} \snorm{w}_{2, K} } \norm{\Bar{v}_{h} - v_{h}}_{0, \pd K}
\\
   \le c \sum_{K} \max\brac{1, \alpha^{-\frac{1}{2}}} \unorm{\vect{w}}_{D^{\prime}} \sum_{K}
                    \brac{\frac{\alpha\kappa}{h_{K}}}^{\frac{1}{2}}
                     \norm{\Bar{v}_{h} - v_{h}}_{0, \pd K},
\end{multline}
\begin{multline}
  \sum_{K} \kappa \norm{\Bar{w} - w}_{0, \pd K} \norm{\nabla v_{h} \cdot \vect{n}}_{0, \pd K}
\le
  c \sum_{K} \norm{\Bar{w} - w}_{0, \pd K} \brac{h^{-\frac{1}{2}}_{K}\snorm{v}_{1, K} + h^{\frac{1}{2}}_{K} \snorm{v}_{2, K} }
\\
\le c \max\brac{1, \alpha^{-\frac{1}{2}}} \unorm{\vect{w}}_{D^{\prime}}
  \sum_{K} \brac{\frac{\kappa}{\alpha}}^{\frac{1}{2}} \brac{\snorm{v}_{1, K} + h_{K} \snorm{v}_{2, K} },
\end{multline}
\begin{equation}
  \sum_{K} \frac{\alpha\kappa}{h_{K}} \norm{\Bar{w} - w}_{0, \pd K}
                     \norm{\Bar{v} - v}_{0, \pd K}
        \le \unorm{\vect{w}}_{D^{\prime}}
                 \sum_{K}\brac{\frac{\alpha\kappa}{h_{K}}}^{\frac{1}{2}}
                 \norm{\Bar{v} - v}_{0, \pd K}.
\end{equation}
Summing these inequalities shows that the bilinear form is continuous with
respect to~$\unorm{\cdot}_{D^\prime}$, with $C_{A} = c \max\brac{1, \alpha^{-\frac{1}{2}}}$.
\end{proof}

The penalty term $\alpha$ is usually taken to be greater than one, in which case $C_{D} = c$.

\begin{lemma}[convergence]
If $\alpha$ is chosen suitably large such that the bilinear form is coercive,
then if $u$ solves equation~\eqref{eqn:strong} and $\vect{u} = \brac{u, u}$,
and $\vect{u}_{h}$ is the solution to equation~\eqref{eqn:fe_problem}
for the case $\mu = 0$, $\vect{a} = \vect{0}$ and $\kappa > 0$,
then
\begin{equation}
  \unorm{\vect{u} - \vect{u}_{h}}_{D}
      \le \brac{1 + \frac{\brac{1+c\alpha^{-1}}C_{D}}{\beta_{D}}} \inf_{\vect{w}_{h} \in W^{\star}_{h}}
         \unorm{\vect{u} - \vect{w}_{h}}_{D^{\prime}}.
\end{equation}
\end{lemma}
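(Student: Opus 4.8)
The plan is to follow the same C\'ea-type argument used for the hyperbolic convergence lemma, combining coercivity, consistency and continuity, with one extra ingredient needed to reconcile the two norms. Fix an arbitrary $\vect{w}_{h} \in W^{\star}_{h}$ and set $\vect{e}_{h} = \vect{u}_{h} - \vect{w}_{h}$, which lies in $W^{\star}_{h}$. First I would invoke coercivity (Lemma~\ref{lemma:diff_coercivity}) to write $\beta_{D}\unorm{\vect{e}_{h}}_{D}^{2} \le B_{D}\brac{\vect{e}_{h}, \vect{e}_{h}}$.

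Next, since $\mu = 0$ and $\vect{a} = \vect{0}$ the full bilinear form reduces to $B_{D}$, so consistency (Lemma~\ref{lemma:consistency}) gives $B_{D}\brac{\vect{u} - \vect{u}_{h}, \vect{e}_{h}} = 0$. Adding and subtracting $\vect{u}$ then yields $B_{D}\brac{\vect{e}_{h}, \vect{e}_{h}} = B_{D}\brac{\vect{u}_{h} - \vect{w}_{h}, \vect{e}_{h}} = B_{D}\brac{\vect{u} - \vect{w}_{h}, \vect{e}_{h}}$. Applying the continuity lemma to the right-hand side bounds it by $C_{D}\unorm{\vect{u} - \vect{w}_{h}}_{D^{\prime}}\unorm{\vect{e}_{h}}_{D^{\prime}}$.

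Here lies the one genuine subtlety, and the step I expect to be the crux: coercivity controls $\unorm{\vect{e}_{h}}_{D}$ whereas continuity produces the factor $\unorm{\vect{e}_{h}}_{D^{\prime}}$. These norms differ in general, but $\vect{e}_{h}$ is a \emph{discrete} function, so the norm equivalence on $W^{\star}_{h}$, equation~\eqref{eqn:diff_norm_relationship}, applies and gives $\unorm{\vect{e}_{h}}_{D^{\prime}} \le \brac{1 + c\alpha^{-1}}\unorm{\vect{e}_{h}}_{D}$ (up to relabelling the constant). Substituting this, dividing through by $\unorm{\vect{e}_{h}}_{D}$ (the trivial case $\vect{e}_{h} = \vect{0}$ being immediate), I obtain $\unorm{\vect{e}_{h}}_{D} \le \brac{1 + c\alpha^{-1}}\brac{C_{D}/\beta_{D}}\unorm{\vect{u} - \vect{w}_{h}}_{D^{\prime}}$.

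Finally I would close with the triangle inequality $\unorm{\vect{u} - \vect{u}_{h}}_{D} \le \unorm{\vect{u} - \vect{w}_{h}}_{D} + \unorm{\vect{e}_{h}}_{D}$ together with the trivial bound $\unorm{\vect{v}}_{D} \le \unorm{\vect{v}}_{D^{\prime}}$ to collect both terms over $\unorm{\vect{u} - \vect{w}_{h}}_{D^{\prime}}$, and then take the infimum over $\vect{w}_{h} \in W^{\star}_{h}$. The only point requiring care is precisely the passage between the stability norm and the continuity norm on the error: unlike the hyperbolic case, where continuity was already stated with the stability norm in its second slot, here both arguments of $B_{D}$ are controlled in the $D^{\prime}$ norm, so the discrete equivalence~\eqref{eqn:diff_norm_relationship} is indispensable and is exactly what introduces the $\alpha$-dependent factor into the final constant.
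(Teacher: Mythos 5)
Your proposal is correct and follows essentially the same route as the paper: coercivity applied to $\vect{u}_{h}-\vect{w}_{h}$, Galerkin orthogonality from consistency, continuity in the $\unorm{\cdot}_{D^{\prime}}$ norm, the discrete equivalence~\eqref{eqn:diff_norm_relationship} to return to $\unorm{\cdot}_{D}$, and the triangle inequality. You have also correctly identified the use of the norm equivalence on $W^{\star}_{h}$ as the step that introduces the $\brac{1+c\alpha^{-1}}$ factor, which is exactly how the paper's constant arises.
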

\begin{proof}
Using coercivity, consistency and continuity:
\begin{equation}
\begin{split}
  \beta_{D} \unorm{\vect{u}_{h} - \vect{w}_{h}}^{2}_{D}
    &\le B_{D}\brac{\vect{u}_{h} - \vect{w}_{h}, \vect{u}_{h} - \vect{w}_{h}}
\\
    &= B_{D}\brac{\vect{u} - \vect{w}_{h}, \vect{u}_{h} - \vect{w}_{h}}
\\
  &\le C_{D} \unorm{\vect{u} - \vect{w}_{h}}_{D^{\prime}}
  \unorm{\vect{u}_{h} - \vect{w}_{h}}_{D^{\prime}},
\end{split}
\end{equation}
and then exploiting
$\unorm{\vect{v}_{h}}_{D^{\prime}} \le \brac{1 + c \alpha^{-1}} \unorm{\vect{v}_{h}}_{D}$ (see
equation~\eqref{eqn:diff_norm_relationship}),
\begin{equation}
    \unorm{\vect{u}_{h} - \vect{w}_{h}}_{D}
  \le \beta_{D}^{-1} C_{D} \brac{1 + c\alpha^{-1}} \unorm{\vect{u} - \vect{w}_{h}}_{D^{\prime}},
\end{equation}
which followed by the application of the triangle inequality yields the desired result.
\end{proof}

\begin{lemma}[best approximation]
For the case $\mu = 0$, $\vect{a} = \vect{0}$ and $\kappa > 0$,
if $u \in H^{k+1}\brac{\Omega}$
solves equation~\eqref{eqn:strong} and $\vect{u} = \brac{u, u}$,
and $\vect{u}_{h}$ is the solution to the finite element
problem~\eqref{eqn:fe_problem}, and $\alpha$ is chosen
such that the bilinear form is coercive, then there a exists a
$c_{\alpha} > 0$ such that

\begin{equation}
  \unorm{\vect{u} - \vect{u}_{h}}_{D}  \le c_{\alpha} h^{k} \norm{u}_{k+1, \Omega}
\label{eqn:diff_conv_rate}
\end{equation}
and
\begin{equation}
  \norm{u - u_{h}}_{0, \Omega}  \le c_{\alpha} h^{k+1} \norm{u}_{k+1, \Omega}.
\end{equation}
\end{lemma}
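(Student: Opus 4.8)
The plan is to establish the two estimates in sequence: the energy estimate~\eqref{eqn:diff_conv_rate} by inserting the continuous interpolant into the convergence lemma, and the $L^{2}$ estimate by a duality (Aubin--Nitsche) argument exploiting the symmetry of $B_{D}$.

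For~\eqref{eqn:diff_conv_rate} I would follow the template of the hyperbolic best-approximation lemma. Let $\mathcal{I}_{h}\vect{u} = \brac{\mathcal{I}_{h}u, \Bar{\mathcal{I}}_{h}u}$ denote the continuous interpolant, with $\mathcal{I}_{h}u \in W_{h} \cap C\brac{\Bar{\Omega}}$ and $\Bar{\mathcal{I}}_{h}u = \left.\mathcal{I}_{h}u\right|_{\Gamma^{0}}$, and invoke the interpolation estimate~\eqref{eqn:interpolation_estimate}. The key observation is that, since $\mathcal{I}_{h}u$ is continuous, its trace on each $\pd K$ coincides with $\Bar{\mathcal{I}}_{h}u$, so the facet contribution $\Bar{v}-v$ in $\unorm{\vect{u}-\mathcal{I}_{h}\vect{u}}_{D^{\prime}}$ vanishes identically, exactly as in the hyperbolic case. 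It then remains to bound the two surviving terms, $\sum_{K} \kappa\norm{\nabla\brac{u-\mathcal{I}_{h}u}}^{2}_{0,K}$ and $\sum_{K}\brac{h_{K}^{2}\kappa/\alpha}\snorm{u-\mathcal{I}_{h}u}^{2}_{2,K}$, each by $c\kappa h_{K}^{2k}\snorm{u}^{2}_{k+1,K}$ using~\eqref{eqn:interpolation_estimate} with $m=1$ and $m=2$, respectively. Summing over cells gives $\unorm{\vect{u}-\mathcal{I}_{h}\vect{u}}_{D^{\prime}} \le c_{\alpha} h^{k}\norm{u}_{k+1,\Omega}$, and the convergence lemma delivers~\eqref{eqn:diff_conv_rate}.

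For the $L^{2}$ estimate I would use duality. Let $e = u-u_{h}$ and introduce the adjoint solution $\phi$ of $-\kappa\nabla^{2}\phi = e$ in $\Omega$ with $\phi = 0$ on $\Gamma$, setting $\vect{\phi} = \brac{\phi,\phi}$. Assuming $H^{2}$ elliptic regularity on $\Omega$, one has $\norm{\phi}_{2,\Omega} \le c\kappa^{-1}\norm{e}_{0,\Omega}$. Consistency of the method applied to the adjoint problem gives $B_{D}\brac{\vect{u}-\vect{u}_{h},\vect{\phi}} = \norm{e}^{2}_{0,\Omega}$, while Galerkin orthogonality (Lemma~\ref{lemma:consistency} specialised to $B_{D}$) yields $B_{D}\brac{\vect{u}-\vect{u}_{h},\mathcal{I}_{h}\vect{\phi}} = 0$. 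Subtracting and applying continuity of $B_{D}$,
\begin{equation}
  \norm{e}^{2}_{0,\Omega} = B_{D}\brac{\vect{u}-\vect{u}_{h},\, \vect{\phi}-\mathcal{I}_{h}\vect{\phi}}
      \le C_{D}\, \unorm{\vect{u}-\vect{u}_{h}}_{D^{\prime}}\, \unorm{\vect{\phi}-\mathcal{I}_{h}\vect{\phi}}_{D^{\prime}}.
\end{equation}
The second factor is estimated as in the energy case, giving $\unorm{\vect{\phi}-\mathcal{I}_{h}\vect{\phi}}_{D^{\prime}} \le c\kappa^{1/2} h\norm{\phi}_{2,\Omega}$. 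For the first factor I would split $\vect{u}-\vect{u}_{h} = \brac{\vect{u}-\mathcal{I}_{h}\vect{u}} + \brac{\mathcal{I}_{h}\vect{u}-\vect{u}_{h}}$; the interpolation part contributes $c\kappa^{1/2}h^{k}\norm{u}_{k+1,\Omega}$ directly, while on the discrete part the norm equivalence~\eqref{eqn:diff_norm_relationship} converts $\unorm{\cdot}_{D^{\prime}}$ into $\unorm{\cdot}_{D}$, which is controlled by the already-proven estimate~\eqref{eqn:diff_conv_rate}. Hence $\unorm{\vect{u}-\vect{u}_{h}}_{D^{\prime}} \le c_{\alpha}\kappa^{1/2}h^{k}\norm{u}_{k+1,\Omega}$. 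Combining these with the regularity bound, the powers of $\kappa$ cancel, and dividing through by $\norm{e}_{0,\Omega}$ yields the claimed $O\brac{h^{k+1}}$ rate.

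The main obstacle is the duality step: one must verify that $\vect{\phi} = \brac{\phi,\phi}$ is an admissible consistent function for $B_{D}$, so that $B_{D}\brac{\vect{w},\vect{\phi}}$ reproduces $\brac{w,e}_{0,\Omega}$, which requires $\phi \in H^{2}\brac{\Omega}\cap H^{1}_{0}\brac{\Omega}$ and hence an elliptic regularity hypothesis on $\Omega$; and one must track the $\kappa$- and $\alpha$-dependence carefully so that the powers of $\kappa$ introduced through the $\unorm{\cdot}_{D^{\prime}}$ scalings are exactly offset by the $\kappa^{-1}$ in the regularity estimate. The remaining interpolation and norm-equivalence manipulations are routine given the earlier lemmas.
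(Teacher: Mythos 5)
Your proposal is correct and follows essentially the same route as the paper: the energy estimate via the continuous interpolant (whose facet term vanishes) combined with the convergence lemma, and the $L^{2}$ estimate via an Aubin--Nitsche duality argument relying on adjoint consistency from the symmetry of $B_{D}$, elliptic regularity, and the interpolation estimate for the dual solution. Your explicit splitting of $\unorm{\vect{u}-\vect{u}_{h}}_{D^{\prime}}$ into interpolation and discrete parts via the norm equivalence~\eqref{eqn:diff_norm_relationship} is a detail the paper dismisses as ``trivial,'' so you have if anything been more careful.
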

\begin{proof}
The first estimate follows directly from the
standard interpolation estimate for the continuous interpolant
$\mathcal{I}_{h}\vect{u} = \brac{\mathcal{I}_{h}u, \Bar{\mathcal{I}}_{h}u}$,
where again $\mathcal{I}_{h}u \in W_{h} \cap C\brac{\Bar{\Omega}}$
and  $\Bar{\mathcal{I}}_{h}u = \left. \mathcal{I}_{h}u\right|_{\Gamma^{0}}$,
which is an element of~$\Bar{W}_{h}$.
Applying the standard interpolation estimate~\eqref{eqn:interpolation_estimate}
to $\unorm{\vect{u} - \mathcal{I}_{h}\vect{u}}_{D^{\prime}}$,
\begin{align}
  \norm{\nabla\brac{u - \mathcal{I}_{h} u}}^{2}_{0, K}
                &\le c h^{2k} \snorm{u}^{2}_{k+1, K},
\\
  \norm{\brac{u - \Bar{\mathcal{I}}_{h} u}-\brac{u - \mathcal{I}_{h} u}}_{0, \pd K}^{2}
                    &= 0,
\\
  h^{2}_{K} \snorm{u - \mathcal{I} u_{h}}^{2}_{2, K}
                    &\le ch^{2k} \snorm{u}^{2}_{k+1, K}.
\end{align}
Using these inequalities leads to equation~\eqref{eqn:diff_conv_rate}.
The $L^{2}$ estimate follows from the usual duality arguments.
Owing to adjoint consistency of the method (since the bilinear form
is symmetric), if $\vect{w} \in H^{2}(\Omega) \cap H^{1}_{0}(\Omega)$
is the solution to the dual problem
\begin{equation}
  B_{D}\brac{\vect{v}, \vect{w}}
       = \int_{\Omega} \brac{u - u_{h}} \cdot v \dif x
               \quad \forall \vect{v} \in W^{\star}(h),
\end{equation}
and $\vect{w}_{I} \in W^{\star}_{h}$ is a suitable interpolant of $\vect{w}$,
then from consistency and continuity of the bilinear form,
and the estimate in~\eqref{eqn:diff_conv_rate}, it follows that
\begin{equation}
\begin{split}
  \norm{u - u_{h}}^{2}_{0, \Omega}
      &=   B_{D}\brac{\vect{u} - \vect{u}_{h}, \vect{w}}
\\
      &=   B_{D}\brac{\vect{u} - \vect{u}_{h}, \vect{w} - \vect{w}_{I}}
\\
      &\le C_{D} \unorm{\vect{u} - \vect{u}_{h}}_{D^{\prime}} \unorm{\vect{w} - \vect{w}_{I}}_{D^{\prime}}
\\
      &\le c_{\alpha} h \norm{w}_{2, \Omega} \unorm{\vect{u} - \vect{u}_{h}}_{D^{\prime}}.
\end{split}
\end{equation}
Finally, using the elliptic regularity estimate
$\norm{w}_{2, \Omega} \le c_{\alpha} \norm{u - u_{h}}_{0, \Omega}$ leads to
\begin{equation}
  \norm{u - u_{h}}_{0, \Omega}
      \le c_{\alpha} h \unorm{\vect{u} - \vect{u}_{h}}_{D^{\prime}}.
\end{equation}
The $L^{2}$ error estimate follows trivially.
\end{proof}

\section{Observed stability and convergence properties}
\label{sec:examples}
Some numerical examples are now presented to examine stability and convergence
properties of the method. In all examples, the interface functions
are chosen to be continuous
everywhere ($l =1$ in equation~\eqref{eqn:interface_space}), so the number of
global degrees of freedom is the same as for a continuous Galerkin method on the
same mesh. When computing the error for cases using
polynomial basis order $k$, the source term and the exact solution are
interpolated on the same mesh but using Lagrange elements of order~$k+6$.
Likewise, if the field $\vect{a}$ does not come from a finite element space
it is interpolated using order $k+6$ Lagrange elements.
Exact integration is performed for all terms.
All meshes are uniform and
the measure of the cell size $h_{K}$
is set to two times the circumradius of cell~$K$.

The computer code used for all examples in this section is freely available
in the supporting material~\citep{wells:2009} under a GNU Public License.
The necessary low-level computer code specific to this problem has been
generated automatically from a high-level scripted input language
using freely available tools from the FEniCS
Project~\citep{kirby:2006,oelgaard:2008,oelgaard:2008b,logg:2009}.
The computer input resembles closely the mathematical
notation and abstractions used in this work to describe the method.
Particular advantage is taken of automation
developments for methods that involve facet
integration~\citep{oelgaard:2008}.
\subsection{Hyperbolic problem}
Consider the domain
$\Omega = \brac{-1, 1}^{2}$, with $\mu = 1$,
$\vect{a} = \brac{0.8, 0.6}$,  $\kappa = 0$ and $u = 1$ on $\Gamma_{-}$.
The source term $f$ is chosen such that
\begin{equation}
  u = 1+ \sin\brac{ \pi \brac{1+x}\brac{1+y}^{2}/8}
\end{equation}
is the analytical solution to equation~\eqref{eqn:strong}.
This example has been considered previously
for discontinuous Galerkin methods
by \citet{bey:1996}
and \citet{houston:2000}.

The computed error $\unorm{\vect{u} - \vect{u}_{h}}_{A}$
is presented in Figure~\ref{fig:hyperbolic_h_unorm}
for $h$-refinement with various polynomial orders.
As predicted by the analysis, the
observed converge rate is~$k + 1/2$.
\begin{figure}
  \center\includegraphics[width=0.75\textwidth]{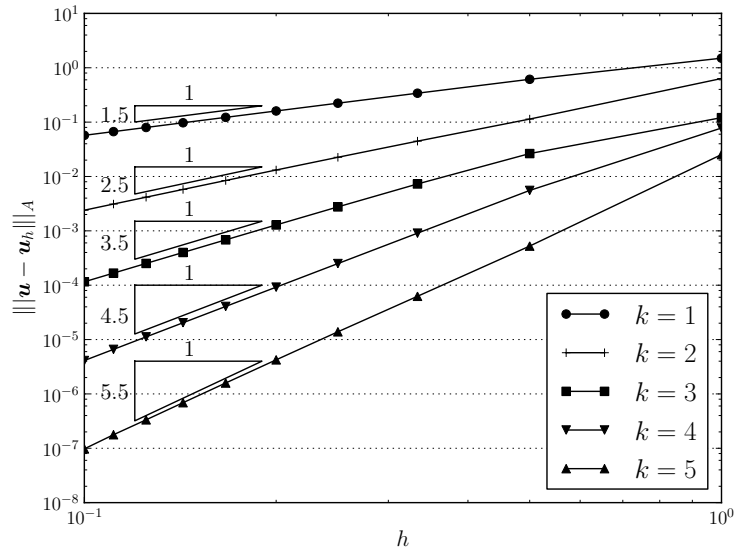}
\caption{Convergence for the hyperbolic case with $h$-refinement for
various polynomial orders in $\unorm{\cdot}_{A}$.}
\label{fig:hyperbolic_h_unorm}
\end{figure}
For all polynomial orders the method converges robustly.
\subsection{Elliptic problem}
A problem on the domain $\Omega = \brac{-1, 1}^{2}$ is now considered,
with $\mu = 0$, $\vect{a} = \brac{0, 0}$ and $\kappa = 1$.
The source term $f$ is selected such that
\begin{equation}
  u = \sin\brac{\pi x}\sin\brac{\pi y}
\label{eqn:elliptic_exact}
\end{equation}
is the analytical solution to equation~\eqref{eqn:strong}. The value
of the penalty parameter is stated for each considered case.

The computed errors in the $L^{2}$ norm for $h$-refinement with elements
of varying polynomial order and $\alpha = 5$ are shown in
Figure~\ref{fig:elliptic_h_5}.
\begin{figure}
  \center\includegraphics[width=0.75\textwidth]{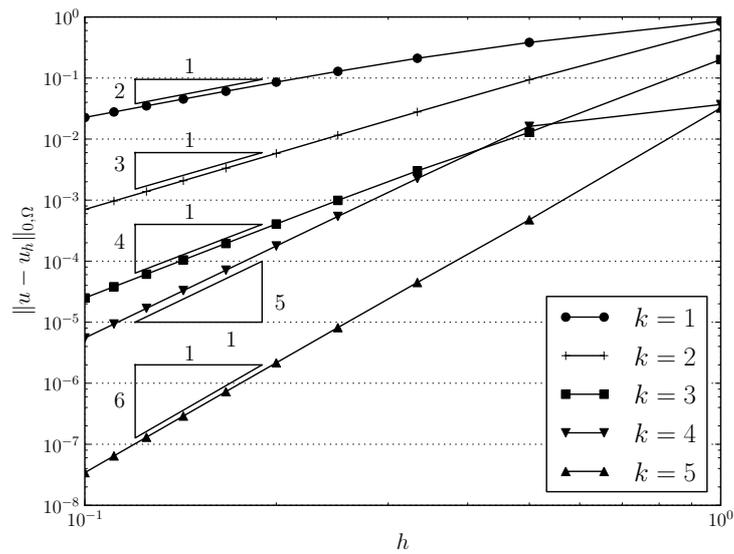}
\caption{Convergence for the elliptic case in $L^{2}$ with $h$-refinement
for various polynomial orders and $\alpha = 5$.}
\label{fig:elliptic_h_5}
\end{figure}
In all cases, the predicted $k+1$ order of convergence is observed.
The computed results for $\alpha = 6$ are shown in
Figure~\ref{fig:elliptic_h_6}, in which the convergence for the $k=2$
case is somewhat erratic.
\begin{figure}
  \center\includegraphics[width=0.75\textwidth]{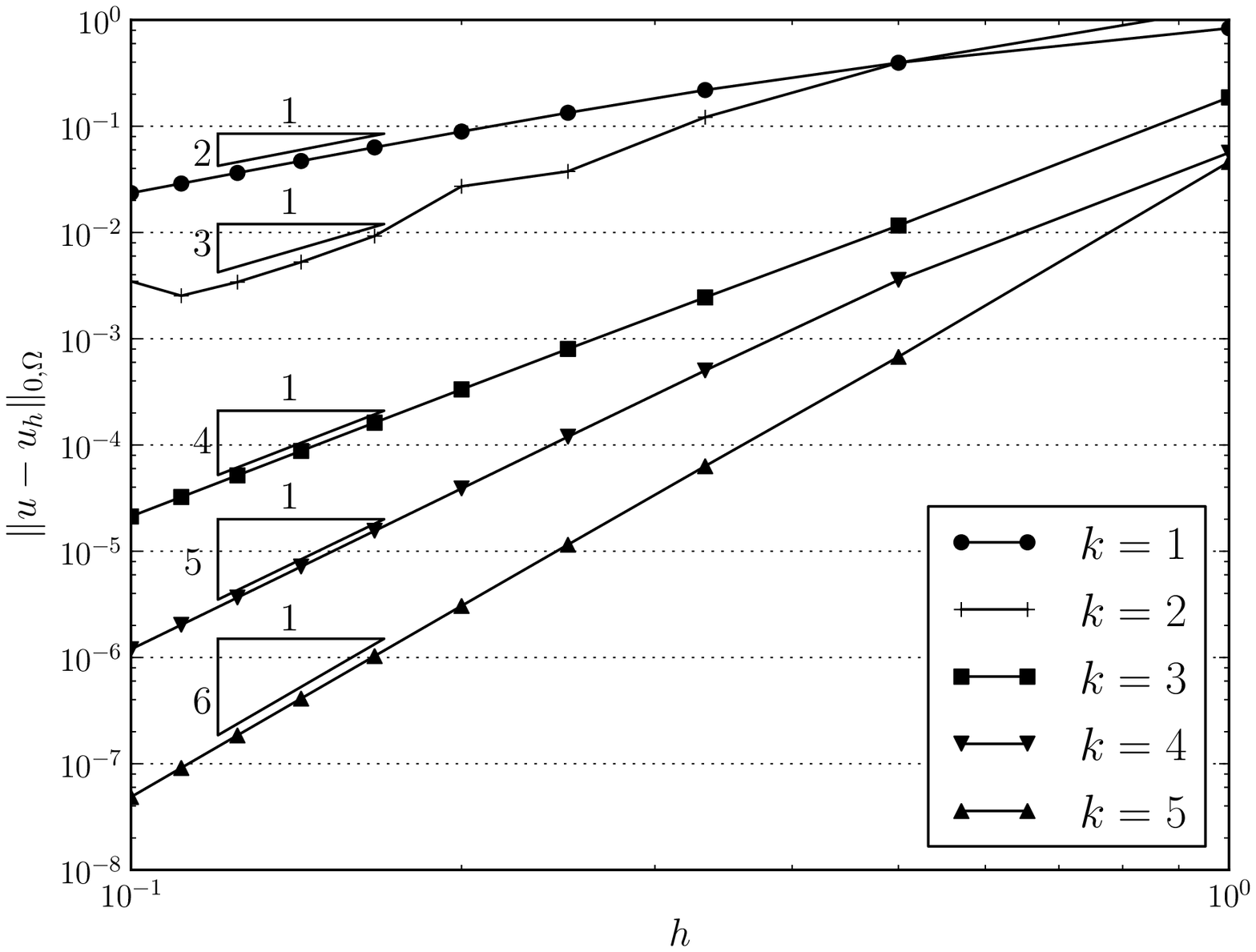}
\caption{Convergence for the elliptic case  in $L^{2}$ with $h$-refinement
for various polynomial orders and~$\alpha = 6$.}
\label{fig:elliptic_h_6}
\end{figure}
Using $\alpha = 4k^{2}$, since the penalty parameter for the interior penalty
method usually needs to be increased with increasing polynomial order,
reliable convergence behaviour at the predicted rate
is recovered, as can be seen in
Figure~\ref{fig:elliptic_h_4kk}.
\begin{figure}
  \center\includegraphics[width=0.75\textwidth]{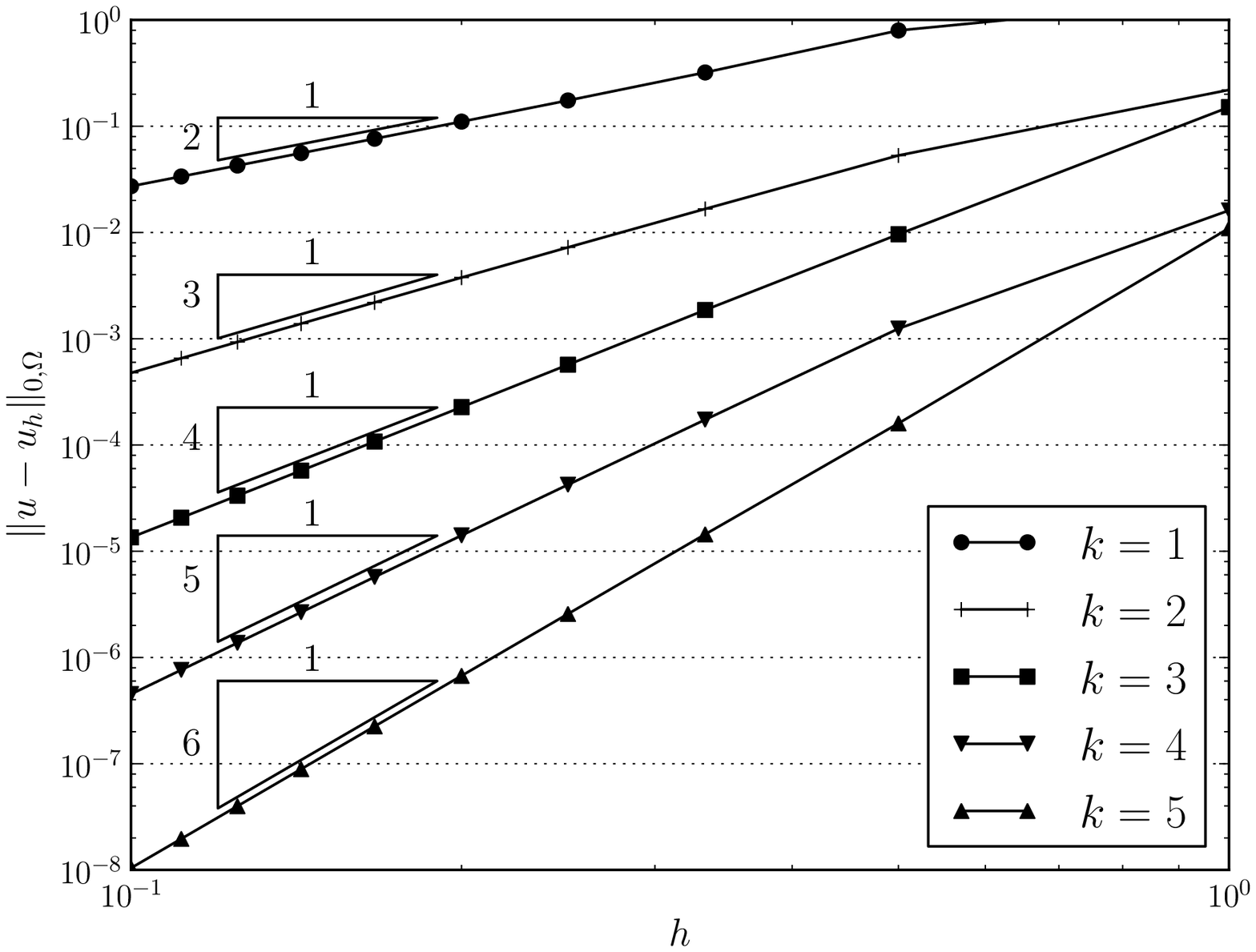}
\caption{Convergence for the elliptic case  in $L^{2}$ with $h$-refinement
for various polynomial orders and~$\alpha = 4k^{2}$.}
\label{fig:elliptic_h_4kk}
\end{figure}
\subsection{Advection-diffusion problems}
An advection-diffusion problem is considered on the domain
$\Omega = \brac{-1, 1}^{2}$,
with $\mu = 0$, $\vect{a} = \brac{e^{x}(y\cos y + \sin y), e^{x}y\sin y}$ and
for various values of~$\kappa$. The source term $f$ is chosen such that
equation~\eqref{eqn:elliptic_exact} is the analytical solution.
For all cases, $\alpha = 4k^{2}$.

The convergence behaviour is examined in terms of
$\unorm{\vect{u} - \vect{u}_{h}}_{A} + \unorm{\vect{u} - \vect{u}_{h}}_{D}$.
The computed error for the case $\kappa = 1 \times 10^{-3}$ is presented in
Figure~\ref{fig:advection_diffusion_1e-3_unorm}.
\begin{figure}
  \center\includegraphics[width=0.75\textwidth]{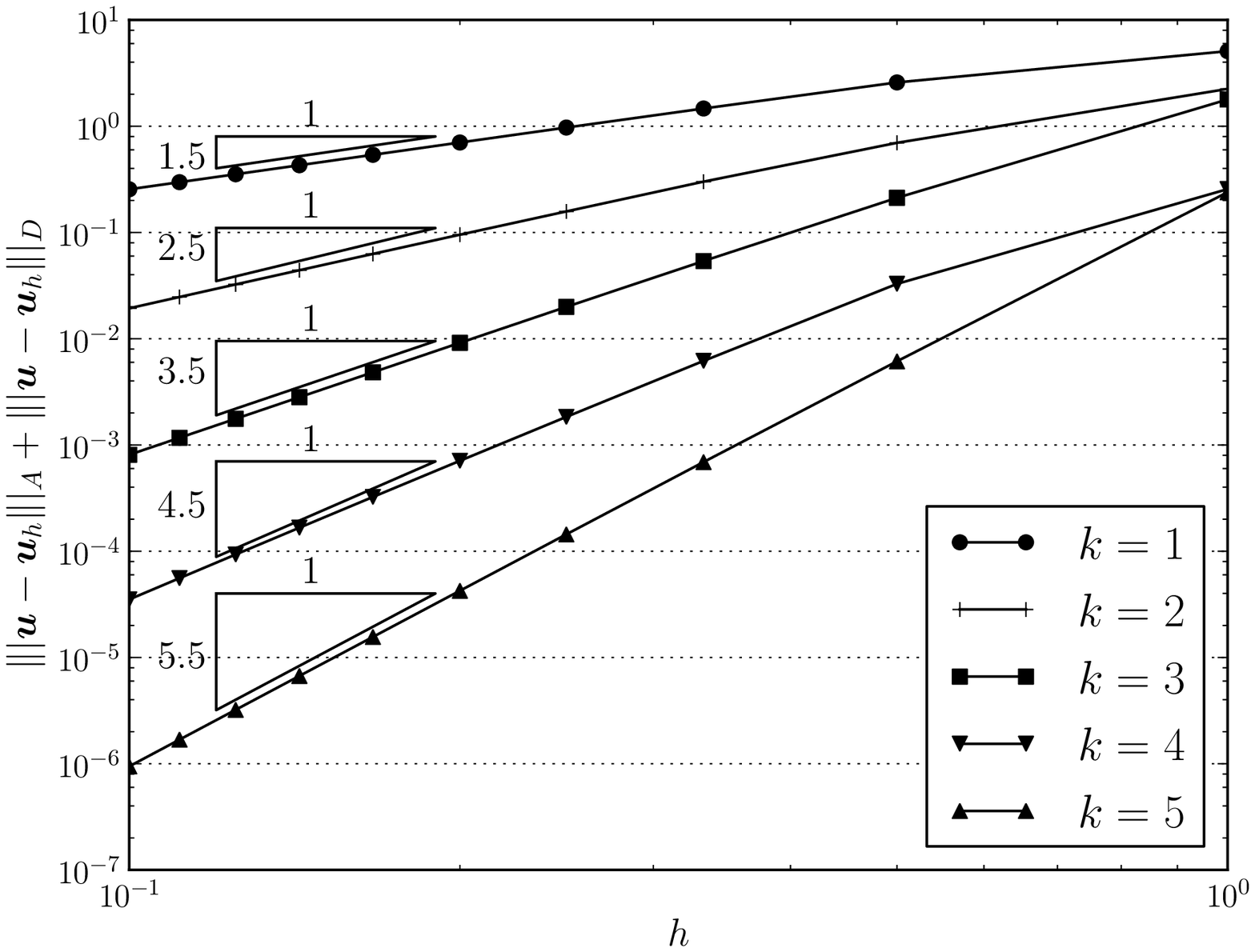}
\caption{Convergence for the advection-diffusion problem in
$\unorm{\vect{u} - \vect{u}_{h}}_{A} + \unorm{\vect{u} - \vect{u}_{h}}_{D}$
for $\kappa = 1\times 10^{-3}$
  with $h$-refinement for various combinations of $k$ and $\alpha = 4k^{2}$.}
\label{fig:advection_diffusion_1e-3_unorm}
\end{figure}
For this advection dominated problem, the method is observed to
converge at the rate $k+1/2$.
For $\kappa =0.1$, the observed convergence response is presented in
Figure~\ref{fig:advection_diffusion_1e-1}.
\begin{figure}
  \center\includegraphics[width=0.75\textwidth]{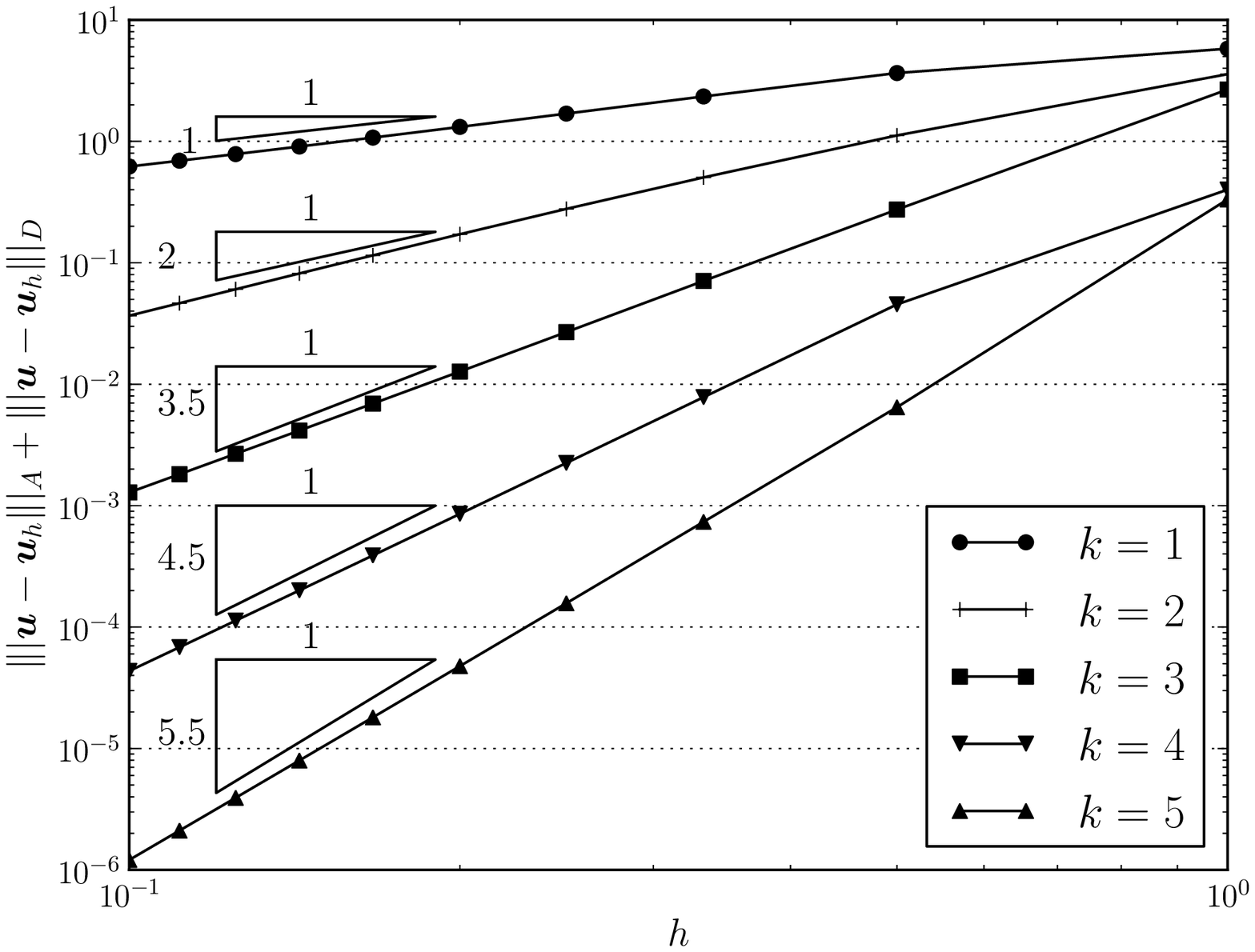}
\caption{Convergence for the advection-diffusion problem in $\unorm{\vect{u} - \vect{u}_{h}}_{A} + \unorm{\vect{u} - \vect{u}_{h}}_{D}$
   for $\kappa = 0.1$
  with $h$-refinement for various combinations of $k$ and $\alpha = 4k^{2}$.}
\label{fig:advection_diffusion_1e-1}
\end{figure}
A convergence rate of $k$ is observed for the lower order polynomial cases,
and the rate appears approach $k + 1/2$ for the higher-order polynomial cases.
For $\kappa = 10$, which is diffusion dominated, the observed convergence
is presented in Figure~\ref{fig:advection_diffusion_10}.
\begin{figure}
  \center\includegraphics[width=0.75\textwidth]{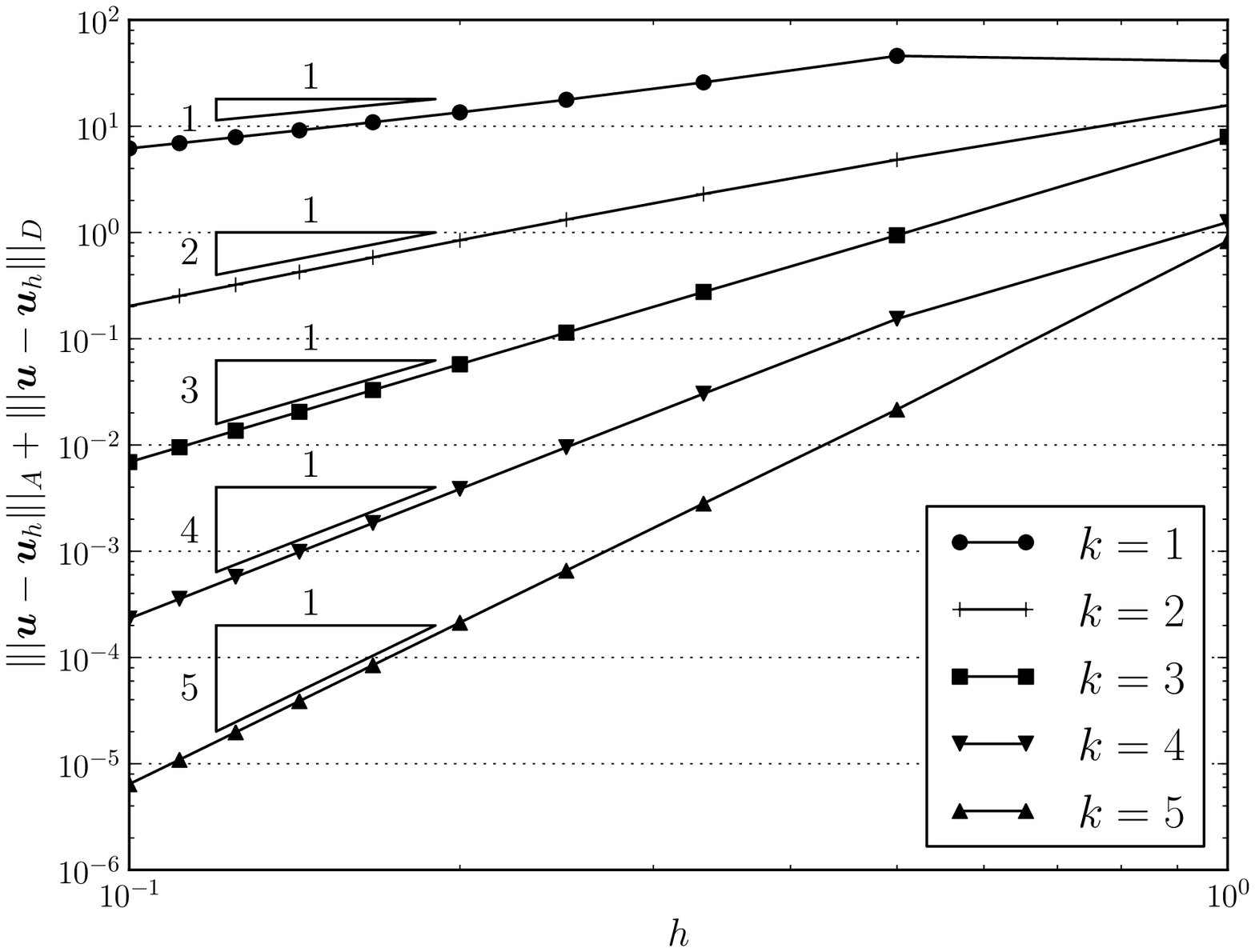}
\caption{Convergence for the advection-diffusion problem  in $\unorm{\vect{u} - \vect{u}_{h}}_{A} + \unorm{\vect{u} - \vect{u}_{h}}_{D}$
for $\kappa = 10$
  with $h$-refinement for various combinations of $k$ and $\alpha = 4k^{2}$.}
\label{fig:advection_diffusion_10}
\end{figure}
As expected, a convergence rate of $k$ is observed for the diffusion-dominated
case.

\section{Conclusions}
Stability and error estimates have been developed for an interface stabilised
finite element method that inherits features of both continuous and
discontinuous Galerkin methods. The analysis is for the hyperbolic and
elliptic limit cases of the
advection-diffusion-reaction
equation.
While the number of global degrees of freedom on a given mesh for the method
is the same as for a continuous finite element
method, the stabilisation mechanism is the same as that present in upwinded
discontinuous Galerkin methods. This is borne out in the stability analysis,
which demands consideration of an inf-sup
condition. Analysis of the method shows that it inherits the stability
properties of discontinuous Galerkin methods, and that it converges in $L^{2}$
at a rate of $k+1/2$ in the advective limit and $k+1$ in the diffusive limit,
as is typical for discontinuous Galerkin and appropriately constructed
stabilised finite element methods.
The analysis presented in this work provides a firm theoretical basis for the method
to support the performance observed in simulations in other works.
The analysis results are supported by
numerical examples which considered a range of polynomial order
elements.
\subsection*{Acknowledgement}
The author acknowledges the helpful comments on this manuscript from
Robert Jan Labeur
and the assistance of Kristian B. {\O}lgaard in
implementing the features that facilitated the automated generation
of the computer code used in Section~\ref{sec:examples}.
\bibliography{references}
\bibliographystyle{unsrtnat}

\end{document}